\date{\today}
\newtheorem{theorem}{Theorem}
\newtheorem{proposition}{Proposition}%[section]
\newtheorem{corollary}{Corollary}%[section]
\newtheorem{lemma}{Lemma}%[section]
\theoremstyle{definition}
\begin{document}

\title[On the semigroup of injective endomorphisms of the semigroup $\boldsymbol{B}_{\omega}^{\mathscr{F}_n}$]{On the semigroup of injective endomorphisms of the semigroup $\boldsymbol{B}_{\omega}^{\mathscr{F}_n}$ which is generated by the family $\mathscr{F}_n$ of initial finite intervals of $\omega$}
\author{Oleg Gutik and Olha Popadiuk}
\address{Faculty of Mechanics and Mathematics,
Lviv University, Universytetska 1, Lviv, 79000, Ukraine}
\email{oleg.gutik@lnu.edu.ua, olha.popadiuk@lnu.edu.ua}

\keywords{Bicyclic extension,  inverse semigroup, endomorphism, automorphism, the semigroup of $\lambda{\times}\lambda$-matrix units.}

\subjclass[2020]{20A15}

\begin{abstract}
In the paper we describe injective endomorphisms of the inverse semigroup $\boldsymbol{B}_{\omega}^{\mathscr{F}}$, which is introduced in the paper [O. Gutik and M. Mykhalenych, \emph{On some generalization of the bicyclic monoid}, Visnyk Lviv. Univ. Ser. Mech.-Mat.
\textbf{90} (2020), 5--19 (in Ukrainian)], in the case when the family $\mathscr{F}_n$ is generated by the set $\{0,1,\ldots,n\}$. In particular we show that the semigroup of injective endomorphisms of the semigroup $\boldsymbol{B}_{\omega}^{\mathscr{F}}$ is  isomorphic to $(\omega,+)$. Also we describe the structure of the semigroup  $\mathfrak{End}(\mathscr{B}_{\lambda})$ of all endomorphisms of the semigroup of $\lambda{\times}\lambda$-matrix units $\mathscr{B}_{\lambda}$.
\end{abstract}

\maketitle

%\tableofcontents

\section{Introduction, motivation and main definitions}

We shall follow the terminology of~\cite{Clifford-Preston-1961, Clifford-Preston-1967, Lawson-1998, Petrich-1984}. By $\omega$ we denote the set of all non-negative integers.

Let $\mathscr{P}(\omega)$ be  the family of all subsets of $\omega$. For any $F\in\mathscr{P}(\omega)$ and $n,m\in\omega$ we put $n-m+F=\{n-m+k\colon k\in F\}$ if $F\neq\varnothing$ and $n-m+\varnothing=\varnothing$. A subfamily $\mathscr{F}\subseteq\mathscr{P}(\omega)$ is called \emph{${\omega}$-closed} if $F_1\cap(-n+F_2)\in\mathscr{F}$ for all $n\in\omega$ and $F_1,F_2\in\mathscr{F}$.

We denote $[0;0]=\{0\}$ and $[0;k]=\{0,\ldots,k\}$ for any positive integer $k$. The set $[0;k]$, $k\in\omega$, is called an \emph{initial interval} of $\omega$.

A \emph{partially ordered set} (or shortly a \emph{poset}) $(X,\leqq)$ is the set $X$ with the reflexive, antisymmetric and transitive relation $\leqq$. In this case the relation $\leqq$ is called a partial order on $X$. A partially ordered set $(X,\leqq)$ is \emph{linearly ordered} or is a \emph{chain} if $x\leqq y$ or $y\leqq x$ for any $x,y\in X$. A map $f$ from a poset $(X,\leqq)$ onto a poset $(Y,\eqslantless)$ is said to be an order isomorphism if $f$ is bijective and $x\leqq y$ if and only if $f(x)\eqslantless f(y)$. A \emph{partial order isomorphism} $f$ from a poset $(X,\leqq)$ into a poset $(Y,\eqslantless)$ is an order isomorphism from a subset $A$ of a poset $(X,\leqq)$ onto a subset $B$ of a poset $(Y,\eqslantless)$. For any elements $x$ of a poset $(X,\leqq)$ we denote
\begin{equation*}
  {\uparrow_{\leqq}}x=\{y\in X\colon x\leqq y\}.
\end{equation*}

A nonempty set $S$ with a binary associative operation is called a \emph{semigroup}.
By $(\omega,+)$  we denote the set $\omega$ with the usual addition $(x,y)\mapsto x+y$.

A semigroup $S$ is called {\it inverse} if for any
element $x\in S$ there exists a unique $x^{-1}\in S$ such that
$xx^{-1}x=x$ and $x^{-1}xx^{-1}=x^{-1}$. The element $x^{-1}$ is
called the {\it inverse of} $x\in S$. If $S$ is an inverse
semigroup, then the mapping $\operatorname{inv}\colon S\to S$
which assigns to every element $x$ of $S$ its inverse element
$x^{-1}$ is called the {\it inversion}.

If $S$ is a semigroup, then we shall denote the subset of all
idempotents in $S$ by $E(S)$. If $S$ is an inverse semigroup, then
$E(S)$ is closed under multiplication and we shall refer to $E(S)$ as a
\emph{band} (or the \emph{band of} $S$). Then the semigroup
operation on $S$ determines the following partial order $\preccurlyeq$
on $E(S)$: $e\preccurlyeq f$ if and only if $ef=fe=e$. This order is
called the {\em natural partial order} on $E(S)$. A \emph{semilattice} is a commutative semigroup of idempotents. By $(\omega,\min)$  we denote the set $\omega$ with the semilattice operation $x\cdot y=\min\{x,y\}$.

%If $S$ is an inverse semigroup then the semigroup operation on $S$ determines the following partial order $\preccurlyeq$ on $S$: $s\preccurlyeq t$ if and only if there exists $e\in E(S)$ such that $s=te$. This order is called the {\em natural partial order} on $S$ \cite{Wagner-1952}.

For semigroups $S$ and $T$, a map $\mathfrak{h}\colon S\to T$ is called:
\begin{itemize}
  \item a \emph{homomorphism} if $\mathfrak{h}(s_1\cdot s_2)=\mathfrak{h}(s_1)\cdot \mathfrak{h}(s_2)$ for all $s_1,s_2\in S$;
  \item an \emph{annihilating homomorphism} if $\mathfrak{h}$ is a homomorphism and $\mathfrak{h}(s_1)=\mathfrak{h}(s_2)$ for all $s_1,s_2\in S$;
  \item an \emph{isomorphism} if $\mathfrak{h}\colon S\to T$ is a bijective homomorphism.
\end{itemize}
For a semigroup $S$ a homomorphism (an isomorphism) $\mathfrak{h}\colon S\to S$ is called an \emph{endomorphism} (\emph{automorphism}) of $S$. For simplicity of calculation the image of $s\in S$ under an endomorphism $\mathfrak{e}$ of a semigroup $S$ we shall denote  by $(s)\mathfrak{e}$.

A \emph{congruence} on a semigroup $S$ is an equivalence relation $\mathfrak{C}$ on $S$ such that $(s,t)\in\mathfrak{C}$ implies that $(as,at),(sb,tb)\in\mathfrak{C}$ for all $a,b\in S$. Every congruence $\mathfrak{C}$ on a semigroup $S$ generates the \emph{associated natural homomorphism} $\mathfrak{C}^\natural\colon S\to S/\mathfrak{C}$ which assigns to each element $s$ of $S$ its congruence class $[s]_\mathfrak{C}$ in the quotient semigroup $S/\mathfrak{C}$. Also every homomorphism $\mathfrak{h}\colon S\to T$ of semigroups $S$ and $T$ generates the congruence $\mathfrak{C}_\mathfrak{h}$ on $S$: $(s_1,s_2)\in\mathfrak{C}_\mathfrak{h}$ if and only if $(s_1)\mathfrak{h}=(s_2)\mathfrak{h}$.

A nonempty subset $I$ of a semigroup $S$ is called an \emph{ideal} of $S$ if $SIS=\{asb\colon s\in I, \; a,b\in S\}\subseteq I$. Every ideal $I$ of a semigroup $S$ generates the congruence $\mathfrak{C}_I=(I\times I)\cup\Delta_S$ on $S$, which is called the \emph{Rees congruence} on $S$.

Let $\mathscr{I}_\lambda$ denote the set of all partial one-to-one transformations of $\lambda$ together with the following semigroup operation:
\begin{equation*}
    x(\alpha\beta)=(x\alpha)\beta \quad \mbox{if} \quad
    x\in\operatorname{dom}(\alpha\beta)=\{
    y\in\operatorname{dom}\alpha\colon
    y\alpha\in\operatorname{dom}\beta\}, \qquad \mbox{for} \quad
    \alpha,\beta\in\mathscr{I}_\lambda.
\end{equation*}
The semigroup $\mathscr{I}_\lambda$ is called the \emph{symmetric
inverse semigroup} over the cardinal $\lambda$~(see \cite{Clifford-Preston-1961}). For any $\alpha\in\mathscr{I}_\lambda$ the cardinality of $\operatorname{dom}\alpha$ is called the \emph{rank} of $\alpha$ and it is denoted by $\operatorname{rank}\alpha$. The symmetric inverse semigroup was introduced by V.~V.~Wagner~\cite{Wagner-1952}
and it plays a major role in the theory of semigroups.

%For every $\alpha\in \mathscr{I}_\lambda$ we put $\operatorname{rank}\alpha=\left|\operatorname{dom}\alpha\right|$.

Put
$\mathscr{I}_\lambda^n=\{ \alpha\in\mathscr{I}_\lambda\colon
\operatorname{rank}\alpha\leqslant n\}$,
for $n=1,2,3,\ldots$. Obviously,
$\mathscr{I}_\lambda^n$ ($n=1,2,3,\ldots$) are inverse semigroups,
$\mathscr{I}_\lambda^n$ is an ideal of $\mathscr{I}_\lambda$, for each $n=1,2,3,\ldots$. The semigroup
$\mathscr{I}_\lambda^n$ is called the \emph{symmetric inverse semigroup of
finite transformations of the rank $\leqslant n$} \cite{Gutik-Reiter-2009}. By
\begin{equation*}
\left({%
\begin{smallmatrix}
  x_1 & x_2 & \cdots & x_n \\
  y_1 & y_2 & \cdots & y_n
\end{smallmatrix}
}\right)
\end{equation*}
we denote a partial one-to-one transformation which maps $x_1$ onto $y_1$, $x_2$ onto $y_2$, $\ldots$, and $x_n$ onto $y_n$. Obviously, in such case we have $x_i\neq x_j$ and $y_i\neq y_j$ for $i\neq j$ ($i,j=1,2,3,\ldots,n$). The empty partial map $\varnothing\colon \lambda\rightharpoonup\lambda$ is denoted by $\boldsymbol{0}$. It is obvious that $\boldsymbol{0}$ is zero of the semigroup $\mathscr{I}_\lambda^n$.

For a partially ordered set $(P, \leqq)$, a subset $X$ of $P$ is called \emph{order-convex}, if $x\leqq z\leqq y$ and ${x, y}\subset X$ implies that $z\in X$, for all $x, y, z\in P$ \cite{Harzheim=2005}. It is obvious that the set of all partial order isomorphisms between convex subsets of $(\omega,\leqslant)$ under the composition of partial self-maps forms an inverse subsemigroup of the symmetric inverse semigroup $\mathscr{I}_\omega$ over the set $\omega$. We denote this semigroup by $\mathscr{I}_\omega(\overrightarrow{\mathrm{conv}})$. We put $\mathscr{I}_\omega^n(\overrightarrow{\mathrm{conv}})=\mathscr{I}_\omega(\overrightarrow{\mathrm{conv}})\cap\mathscr{I}_\omega^n$ and it is obvious that $\mathscr{I}_\omega^n(\overrightarrow{\mathrm{conv}})$ is closed under the semigroup operation of $\mathscr{I}_\omega^n$ and the semigroup $\mathscr{I}_\omega^n(\overrightarrow{\mathrm{conv}})$ is called the \emph{inverse semigroup of
convex order isomorphisms of $(\omega,\leqslant)$ of the rank $\leqslant n$}. Obviously that every non-zero element of the semigroup $\mathscr{I}_\omega^n(\overrightarrow{\mathrm{conv}})$ of the rank  $k\leqslant n$ has a form
\begin{equation*}
\left({%
\begin{smallmatrix}
  i & i+1 & \cdots & i+k-1 \\
  j & j+1 & \cdots & j+k-1
\end{smallmatrix}
}\right)
\end{equation*}
for some $i,j\in\omega$.

The bicyclic monoid ${\mathscr{C}}(p,q)$ is the semigroup with the identity $1$ generated by two elements $p$ and $q$ subjected only to the condition $pq=1$. The semigroup operation on ${\mathscr{C}}(p,q)$ is determined as follows:
\begin{equation*}
    q^kp^l\cdot q^mp^n=q^{k+m-\min\{l,m\}}p^{l+n-\min\{l,m\}}.
\end{equation*}
It is well known that the bicyclic monoid ${\mathscr{C}}(p,q)$ is a bisimple (and hence simple) combinatorial $E$-unitary inverse semigroup and every non-trivial congruence on ${\mathscr{C}}(p,q)$ is a group congruence \cite{Clifford-Preston-1961}.

On the set $\boldsymbol{B}_{\omega}=\omega\times\omega$ we define the semigroup operation ``$\cdot$'' in the following way
\begin{equation*}
  (i_1,j_1)\cdot(i_2,j_2)=
  \left\{
    \begin{array}{ll}
      (i_1-j_1+i_2,j_2), & \hbox{if~} j_1\leqslant i_2;\\
      (i_1,j_1-i_2+j_2), & \hbox{if~} j_1\geqslant i_2.
    \end{array}
  \right.
\end{equation*}
It is well known that the semigroup $\boldsymbol{B}_{\omega}$ is isomorphic to the bicyclic monoid by the mapping $\mathfrak{h}\colon \mathscr{C}(p,q)\to \boldsymbol{B}_{\omega}$, $q^kp^l\mapsto (k,l)$ (see: \cite[Section~1.12]{Clifford-Preston-1961} or \cite[Exercise IV.1.11$(ii)$]{Petrich-1984}).

%A {\it topological} ({\it semitopological}) {\it semigroup} is a topological space together with a continuous (separately continuous) semigroup operation. If $S$ is a~semigroup and $\tau$ is a topology on $S$ such that $(S,\tau)$ is a topological semigroup, then we shall call $\tau$ a \emph{semigroup} \emph{topology} on $S$, and if $\tau$ is a topology on $S$ such that $(S,\tau)$ is a semitopological semigroup, then we shall call $\tau$ a \emph{shift-continuous} \emph{topology} on~$S$. %An inverse topological semigroup with the continuous inversion is called a \emph{topological inverse semigroup}. If $S$ is an~inverse semigroup and $\tau$ is a topology on $S$ such that $(S,\tau)$ is a topological inverse semigroup, then we shall call $\tau$ a \emph{semigroup inverse topology} on $S$.

Next we shall describe the construction which is introduced in \cite{Gutik-Mykhalenych=2020}.

Let $\boldsymbol{B}_{\omega}$ be the bicyclic monoid and $\mathscr{F}$ be an ${\omega}$-closed subfamily of $\mathscr{P}(\omega)$. On the set $\boldsymbol{B}_{\omega}\times\mathscr{F}$ we define the semigroup operation ``$\cdot$'' in the following way
\begin{equation*}
  (i_1,j_1,F_1)\cdot(i_2,j_2,F_2)=
  \left\{
    \begin{array}{ll}
      (i_1-j_1+i_2,j_2,(j_1-i_2+F_1)\cap F_2), & \hbox{if~} j_1\leqslant i_2;\\
      (i_1,j_1-i_2+j_2,F_1\cap (i_2-j_1+F_2)), & \hbox{if~} j_1\geqslant i_2.
    \end{array}
  \right.
\end{equation*}
In \cite{Gutik-Mykhalenych=2020} is proved that if the family $\mathscr{F}\subseteq\mathscr{P}(\omega)$ is ${\omega}$-closed then $(\boldsymbol{B}_{\omega}\times\mathscr{F},\cdot)$ is a semigroup. Moreover, if an ${\omega}$-closed family  $\mathscr{F}\subseteq\mathscr{P}(\omega)$ contains the empty set $\varnothing$ then the set
$ %\begin{equation*}
  \boldsymbol{I}=\{(i,j,\varnothing)\colon i,j\in\omega\}
$ %\end{equation*}
is an ideal of the semigroup $(\boldsymbol{B}_{\omega}\times\mathscr{F},\cdot)$. For any ${\omega}$-closed family $\mathscr{F}\subseteq\mathscr{P}(\omega)$ the following semigroup
\begin{equation*}
  \boldsymbol{B}_{\omega}^{\mathscr{F}}=
\left\{
  \begin{array}{ll}
    (\boldsymbol{B}_{\omega}\times\mathscr{F},\cdot)/\boldsymbol{I}, & \hbox{if~} \varnothing\in\mathscr{F};\\
    (\boldsymbol{B}_{\omega}\times\mathscr{F},\cdot), & \hbox{if~} \varnothing\notin\mathscr{F}
  \end{array}
\right.
\end{equation*}
is defined in \cite{Gutik-Mykhalenych=2020}. The semigroup $\boldsymbol{B}_{\omega}^{\mathscr{F}}$ generalizes the bicyclic monoid and the countable semigroup of matrix units. It is proved in \cite{Gutik-Mykhalenych=2020} that $\boldsymbol{B}_{\omega}^{\mathscr{F}}$ is a combinatorial inverse semigroup and Green's relations, the natural partial order on $\boldsymbol{B}_{\omega}^{\mathscr{F}}$ and its set of idempotents are described. The criteria of simplicity, $0$-simplicity, bisimplicity, $0$-bisimplicity of the semigroup $\boldsymbol{B}_{\omega}^{\mathscr{F}}$ and when $\boldsymbol{B}_{\omega}^{\mathscr{F}}$ has the identity, is isomorphic to the bicyclic semigroup or the countable semigroup of matrix units are given. In particularly in \cite{Gutik-Mykhalenych=2020} is proved that the semigroup $\boldsymbol{B}_{\omega}^{\mathscr{F}}$ is isomorphic to the semigrpoup of ${\omega}{\times}{\omega}$-matrix units if and only if $\mathscr{F}$ consists of a singleton set and the empty set.

The semigroup $\boldsymbol{B}_{\omega}^{\mathscr{F}}$ in the case when the family $\mathscr{F}$ consists of the empty set and some singleton subsets of $\omega$ is studied in \cite{Gutik-Lysetska=2021}. It is proved that the semigroup $\boldsymbol{B}_{\omega}^{\mathscr{F}}$ is isomorphic to the  subsemigroup $\mathscr{B}_{\omega}^{\Rsh}(\boldsymbol{F}_{\min})$ of the Brandt $\omega$-extension of the subsemilattice $(\boldsymbol{F},\min)$ of $(\omega,\min)$, where $\boldsymbol{F}=\bigcup\mathscr{F}$. Also topologizations of the semigroup  $\boldsymbol{B}_{\omega}^{\mathscr{F}}$ and its closure in semitopological semigroups are studied.

For any $n\in\omega$ we put $\mathscr{F}_n=\left\{[0;k]\colon k=0,\ldots,n\right\}$. It is obvious that $\mathscr{F}_n$ is an $\omega$-closed family of $\omega$.

In the paper \cite{Gutik-Popadiuk=2022} we study the semigroup $\boldsymbol{B}_{\omega}^{\mathscr{F}_n}$. It is shown that the Green relations $\mathscr{D}$ and $\mathscr{J}$ coincide in $\boldsymbol{B}_{\omega}^{\mathscr{F}_n}$, the semigroup $\boldsymbol{B}_{\omega}^{\mathscr{F}_n}$ is isomorphic to the semigroup $\mathscr{I}_\omega^{n+1}(\overrightarrow{\mathrm{conv}})$, and  $\boldsymbol{B}_{\omega}^{\mathscr{F}_n}$ admits only Rees congruences. Also in \cite{Gutik-Popadiuk=2022}, we study shift-continuous topologies of the semigroup $\boldsymbol{B}_{\omega}^{\mathscr{F}_n}$. In particular we prove that for any shift-continuous $T_1$-topology $\tau$ on the semigroup $\boldsymbol{B}_{\omega}^{\mathscr{F}_n}$ every non-zero element of $\boldsymbol{B}_{\omega}^{\mathscr{F}_n}$ is an isolated point of $(\boldsymbol{B}_{\omega}^{\mathscr{F}_n},\tau)$, $\boldsymbol{B}_{\omega}^{\mathscr{F}_n}$ admits the unique compact shift-continuous $T_1$-topology, and every $\omega_{\mathfrak{d}}$-compact shift-continuous $T_1$-topology is compact. We describe the closure of the semigroup $\boldsymbol{B}_{\omega}^{\mathscr{F}_n}$ in a Hausdorff semitopological semigroup and prove the criterium when a topological inverse semigroup $\boldsymbol{B}_{\omega}^{\mathscr{F}_n}$ is $H$-closed in the class of Hausdorff topological semigroups.

Surprisingly, not so many articles are devoted to endomorphisms and automorphisms of semigroups. In particular, in \cite{Araujo-Fernandes-Jesus-Maltcev-Mitchell=2011} the authors propose a general recipe for calculating the automorphism groups of semigroups consisting of partial endomorphisms of relational structures over a finite set with a single $m$-ary relation for any positive integer $m$, which determine the automorphism groups of the following semigroups: the full transformation semigroup, the partial transformation semigroup, and the symmetric inverse semigroup, the wreath product of two full transformation semigroups, the partial endomorphisms of any partially ordered set, the full spectrum of semigroups of partial mappings preserving or reversing a linear or circular order. 
In the paper \cite{Fernandes-Jesus-Maltcev-Mitchell=2010} the authors characterize the endomorphisms of the semigroup of all order-
preserving mappings on a finite chain. 
In  \cite{Fernandes-Santos=2019} Fernandes and Santos characterize the monoids of endomorphisms of the semigroup of all order-preserving partial transformations and of the semigroup of all order-preserving
partial permutations of a finite chain. Also the semigroups of a finite chain are described in \cite{Lavers-Solomon=1999, Popova=1962}. 
Endomorphisms and automorphisms of other types of semigroups are studied in \cite{Aizenshtat=1962, Fitzpatrick-Symons=1974, Gutik-Pozdniakova=2022, Gutik-Prokhorenkova-Sekh=2021, Levi-O'Meara-Wood=1986, Magill=1966, Mazorchuk=2002, Schein-Teclezghi=1997, Schein-Teclezghi=1998, Sullivan=1975, Zhuchok-2010} and other papers.

This paper is a continuations of the investigation which are presented in \cite{Gutik-Popadiuk=2022}.
Here we describe injective endomorphisms of the semigroup $\mathscr{I}_{\omega}^{n}(\overrightarrow{\operatorname{conv}})$ for a positive integer $n\geqslant 2$. In particular we show that for $n\geqslant 2$ the semigroup of injective endomorphisms of the semigroup $\boldsymbol{B}_{\omega}^{\mathscr{F}_n}$ is  isomorphic to $(\omega,+)$. Also we describes the structure of the semigroup  $\mathfrak{End}(\mathscr{B}_{\lambda})$ of all endomorphisms of the semigroup of $\lambda{\times}\lambda$-matrix units $\mathscr{B}_{\lambda}$.

\section{On injective endomorphisms of the semigroup $\boldsymbol{B}_{\omega}^{\mathscr{F}_n}$}

\begin{proposition}\label{proposition-2.1}
For any non-negative integer $n$ and arbitrary $p\in\omega$ the map $\mathfrak{e}_p\colon \boldsymbol{B}_{\omega}^{\mathscr{F}_n}\to \boldsymbol{B}_{\omega}^{\mathscr{F}_n}$ defined by the formulae $(\boldsymbol{0})\mathfrak{e}_p=\boldsymbol{0}$ and
\begin{equation*}
(i,j,[0;k])\mathfrak{e}_p=
    (p+i,p+j,[0;k]),
\end{equation*}
is an endomorphism of the semigroup $\boldsymbol{B}_{\omega}^{\mathscr{F}_n}$.
\end{proposition}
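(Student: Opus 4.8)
The plan is to verify directly that $\mathfrak{e}_p$ preserves the multiplication of $\boldsymbol{B}_\omega^{\mathscr{F}_n}$. Well-definedness is immediate, since $p+i,p+j\in\omega$ and the third coordinate $[0;k]$ is left untouched; also $\mathfrak{e}_p$ fixes $\boldsymbol{0}$ by definition. Hence it only remains to show $(a\cdot b)\mathfrak{e}_p=(a)\mathfrak{e}_p\cdot(b)\mathfrak{e}_p$ for all $a,b\in\boldsymbol{B}_\omega^{\mathscr{F}_n}$. If $a=\boldsymbol{0}$ or $b=\boldsymbol{0}$ this is trivial, because then both sides equal $\boldsymbol{0}$ ($\boldsymbol{0}$ being the zero of $\boldsymbol{B}_\omega^{\mathscr{F}_n}$, which $\mathfrak{e}_p$ fixes).

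First I would take $a=(i_1,j_1,[0;k_1])$ and $b=(i_2,j_2,[0;k_2])$ non-zero and split into the two cases $j_1\leqslant i_2$ and $j_1\geqslant i_2$ of the definition of the operation. The verification rests on three elementary facts. (i) $j_1\leqslant i_2$ if and only if $p+j_1\leqslant p+i_2$, so the same branch of the multiplication formula is used when computing $(a)\mathfrak{e}_p\cdot(b)\mathfrak{e}_p$ as when computing $a\cdot b$. (ii) The shifts entering the third coordinate are unchanged, since $(p+j_1)-(p+i_2)=j_1-i_2$ and $(p+i_2)-(p+j_1)=i_2-j_1$; hence $a\cdot b$ and $(a)\mathfrak{e}_p\cdot(b)\mathfrak{e}_p$ have literally the same third coordinate, so in particular one of them equals $\boldsymbol{0}$ (empty third coordinate) exactly when the other does. (iii) In the first two coordinates, in the branch $j_1\leqslant i_2$,
\begin{equation*}
(p+i_1)-(p+j_1)+(p+i_2)=p+(i_1-j_1+i_2),
\end{equation*}
and symmetrically $(p+j_1)-(p+i_2)+(p+j_2)=p+(j_1-i_2+j_2)$ in the branch $j_1\geqslant i_2$. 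Combining (i)--(iii), in each branch the triple obtained by first multiplying and then applying $\mathfrak{e}_p$ coincides with the triple obtained by first applying $\mathfrak{e}_p$ and then multiplying (both reducing to $\boldsymbol{0}$ in the subcase of empty intersection), which is exactly the homomorphism identity.

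There is no genuine obstacle here: the statement is a short case analysis, the only point one should not gloss over being that a triple with empty third coordinate is identified with $\boldsymbol{0}$, which is what makes the two degenerate subcases match. I note that the computation uses nothing special about $\mathscr{F}_n$, so $(i,j,F)\mapsto(p+i,p+j,F)$ is in fact an endomorphism of $\boldsymbol{B}_\omega^{\mathscr{F}}$ for every $\omega$-closed $\mathscr{F}$. Alternatively, through the isomorphism $\boldsymbol{B}_\omega^{\mathscr{F}_n}\cong\mathscr{I}_\omega^{n+1}(\overrightarrow{\operatorname{conv}})$ of \cite{Gutik-Popadiuk=2022}, the map $\mathfrak{e}_p$ corresponds to $\alpha\mapsto\sigma_p^{-1}\alpha\sigma_p$, where $\sigma_p\colon\omega\to\omega$ is the translation $x\mapsto x+p$; since $\sigma_p$ is a total order isomorphism onto an order-convex subset of $\omega$ with $\sigma_p\sigma_p^{-1}$ equal to the identity map of $\omega$, this conjugation is a homomorphism of $\mathscr{I}_\omega$ that carries $\mathscr{I}_\omega^{n+1}(\overrightarrow{\operatorname{conv}})$ into itself, which gives the claim at once.
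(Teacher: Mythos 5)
Your proof is correct and follows essentially the same route as the paper's: a direct case-by-case verification that the translation by $p$ commutes with the multiplication formula, using that $j_1\leqslant i_2$ iff $p+j_1\leqslant p+i_2$ and that all differences entering the formula are translation-invariant. Your explicit handling of the empty-intersection subcase (where the product collapses to $\boldsymbol{0}$) is in fact slightly more careful than the paper's computation, and your closing remarks (generality over arbitrary $\omega$-closed $\mathscr{F}$, and the conjugation picture in $\mathscr{I}_\omega^{n+1}(\overrightarrow{\operatorname{conv}})$) are correct but not needed.
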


\begin{proof}
It is obvious that $(\boldsymbol{0})\mathfrak{e}_p\cdot(\boldsymbol{0})\mathfrak{e}_p=\boldsymbol{0}\cdot\boldsymbol{0}= \boldsymbol{0}=(\boldsymbol{0})\mathfrak{e}_p=(\boldsymbol{0}\cdot\boldsymbol{0})\mathfrak{e}_p$ and
\begin{align*}
  (\boldsymbol{0})\mathfrak{e}_p\cdot(i,j,[0;k])\mathfrak{e}_p&=\boldsymbol{0}\cdot(p+i,p+j,[0;k])= \boldsymbol{0}=(\boldsymbol{0})\mathfrak{e}_p=(\boldsymbol{0}\cdot(i,j,[0;k]))\mathfrak{e}_p;\\
  (i,j,[0;k])\mathfrak{e}_p\cdot(\boldsymbol{0})\mathfrak{e}_p&=(p+i,p+j,[0;k])\cdot\boldsymbol{0}=  \boldsymbol{0}=(\boldsymbol{0})\mathfrak{e}_p=((i,j,[0;k])\cdot\boldsymbol{0})\mathfrak{e}_p,
\end{align*}
for any non-zero element $(i,j,[0;k])$ of the semigroup $\boldsymbol{B}_{\omega}^{\mathscr{F}_n}$. Also, for any non-zero elements $(i_1,j_1,[0;k_1])$ and $(i_2,j_2,[0;k_2])$ of the semigroup $\boldsymbol{B}_{\omega}^{\mathscr{F}_n}$ we have that
\begin{align*}
  (i_1,j_1,[0;k_1])\mathfrak{e}_p&\cdot(i_2,j_2,[0;k_2])\mathfrak{e}_p=  (p+i_1,p+j_1,[0;k_1])\cdot(p+i_2,p+j_2,[0;k_2])=\\
  =&
  \left\{
    \begin{array}{ll}
      (p{+}i_1-(p{+}j_1){+}p{+}i_2,p{+}j_2,(p{+}j_1-(p{+}i_2){+}[0;k_1])\cap[0;k_2]), & \hbox{if~} p{+}j_1<p{+}i_2;\\
      (p+i_1,p+j_2,[0;k_1]\cap[0;k_2]),                                               & \hbox{if~} p{+}j_1=p{+}i_2;\\
      (p{+}i_1,p{+}j_1-(p{+}i_2){+}p{+}j_2,[0;k_1]\cap(p{+}i_2-(p{+}j_1){+}[0;k_2])), & \hbox{if~} p{+}j_1>p{+}i_2
    \end{array}
  \right.{=}\\
  =&
  \left\{
    \begin{array}{ll}
      (p+i_1-j_1+i_2,p+j_2,(j_1-i_2+[0;k_1])\cap[0;k_2]), & \hbox{if~} j_1<i_2;\\
      (p+i_1,p+j_2,[0;k_1]\cap[0;k_2]),                   & \hbox{if~} j_1=i_2;\\
      (p+i_1,p+j_1-i_2+j_2,[0;k_1]\cap(i_2-j_1+[0;k_2])), & \hbox{if~} j_1>i_2
    \end{array}
  \right.
\end{align*}
and
\begin{align*}
  ((i_1,j_1,[0;k_1])\cdot(i_2,j_2,[0;k_2]))\mathfrak{e}_p&=
  \left\{
    \begin{array}{ll}
      (i_1-j_1+i_2,j_2,(j_1-i_2+[0;k_1])\cap[0;k_2])\mathfrak{e}_p, & \hbox{if~} j_1<i_2;\\
      (i_1,j_2,[0;k_1]\cap[0;k_2])\mathfrak{e}_p,                   & \hbox{if~} j_1=i_2;\\
      (i_1,j_1-i_2+j_2,[0;k_2]\cap(i_2-j_1+[0;k_2]))\mathfrak{e}_p, & \hbox{if~} j_1>i_2
    \end{array}
  \right.= \\
  &=
  \left\{
    \begin{array}{ll}
      (p+i_1-j_1+i_2,p+j_2,(j_1-i_2+[0;k_1])\cap[0;k_2]), & \hbox{if~} j_1<i_2;\\
      (p+i_1,p+j_2,[0;k_1]\cap[0;k_2]),                   & \hbox{if~} j_1=i_2;\\
      (p+i_1,p+j_1-i_2+j_2,[0;k_2]\cap(i_2-j_1+[0;k_2])), & \hbox{if~} j_1>i_2,
    \end{array}
  \right.
\end{align*}
and hence the  map $\mathfrak{e}_p$ is an endomorphism of the semigroup $\boldsymbol{B}_{\omega}^{\mathscr{F}_n}$.
\end{proof}

By Theorem~1 of \cite{Gutik-Popadiuk=2022} for any $n\in\omega$ the semigroup $\boldsymbol{B}_{\omega}^{\mathscr{F}_n}$ is isomorphic to the semigroup $\mathscr{I}_\omega^{n+1}(\overrightarrow{\mathrm{conv}})$ by the mapping $\mathfrak{I}\colon \boldsymbol{B}_{\omega}^{\mathscr{F}_n}\to \mathscr{I}_\omega^{n+1}(\overrightarrow{\operatorname{conv}})$, defined by the formulae $(\boldsymbol{0})\mathfrak{I}=\boldsymbol{0}$ and
\begin{equation*}
(i,j,[0;k])\mathfrak{I}=
  \left(%
\begin{smallmatrix}
  i & i+1 & \cdots & i+k \\
  j & j+1 & \cdots & j+k \\
\end{smallmatrix}%
\right).
\end{equation*}
This and Proposition~\ref{proposition-2.1} imply the following corollary.

\begin{corollary}\label{corollary-2.2}
For any positive integer $n$ and arbitrary $p\in\omega$ the map $\mathfrak{e}_p\colon \mathscr{I}_\omega^{n}\to\mathscr{I}_\omega^{n}$ defined by the formulae $(\boldsymbol{0})\mathfrak{e}_p=\boldsymbol{0}$ and
\begin{equation*}
\left(%
\begin{smallmatrix}
  i & i+1 & \cdots & i+k \\
  j & j+1 & \cdots & j+k \\
\end{smallmatrix}%
\right)\mathfrak{e}_p=
\left(%
\begin{smallmatrix}
  p+i & p+i+1 & \cdots & p+i+k \\
  p+j & p+j+1 & \cdots & p+j+k \\
\end{smallmatrix}%
\right), \qquad k=0,\ldots,n-1,
\end{equation*}
is an endomorphism of the semigroup $\mathscr{I}_\omega^{n}$.
\end{corollary}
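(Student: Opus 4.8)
The plan is to transport the endomorphisms $\mathfrak{e}_p$ of $\boldsymbol{B}_{\omega}^{\mathscr{F}_n}$ from Proposition~\ref{proposition-2.1} across the isomorphism $\mathfrak{I}$ recalled just above. Concretely, I would first note that, since $\mathscr{I}_\omega^{n}(\overrightarrow{\operatorname{conv}})$ is the isomorphic image $(\boldsymbol{B}_{\omega}^{\mathscr{F}_{n-1}})\mathfrak{I}$, a map $\mathfrak{e}_p$ on $\mathscr{I}_\omega^{n}(\overrightarrow{\operatorname{conv}})$ is an endomorphism precisely when the composite $\mathfrak{I}^{-1}\circ\mathfrak{e}_p\circ\mathfrak{I}$ is an endomorphism of $\boldsymbol{B}_{\omega}^{\mathscr{F}_{n-1}}$, because conjugation by an isomorphism preserves the property of being a homomorphism of a semigroup into itself. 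So the only thing to check is that this conjugate is exactly the map $\mathfrak{e}_p$ of Proposition~\ref{proposition-2.1} (with $n$ replaced by $n-1$); in fact the statement of Corollary~\ref{corollary-2.2} gives $\mathfrak{e}_p$ only on the elements of $\mathscr{I}_\omega^{n}(\overrightarrow{\operatorname{conv}})$, i.e.\ on $\boldsymbol{0}$ and on the convex order isomorphisms $\left(\begin{smallmatrix} i & \cdots & i+k \\ j & \cdots & j+k\end{smallmatrix}\right)$ with $0\le k\le n-1$, which is precisely the image of $\mathfrak{I}$.

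The key steps, in order: (1) write down the image set $(\boldsymbol{B}_{\omega}^{\mathscr{F}_{n-1}})\mathfrak{I}=\mathscr{I}_\omega^{n}(\overrightarrow{\operatorname{conv}})$, using the explicit description of non-zero elements of $\mathscr{I}_\omega^{n}(\overrightarrow{\operatorname{conv}})$ from the introduction, so that the domain of $\mathfrak{e}_p$ in the corollary is exactly this set; (2) compute $\mathfrak{I}^{-1}\circ\mathfrak{e}_p^{\,\boldsymbol{B}}\circ\mathfrak{I}$, where $\mathfrak{e}_p^{\,\boldsymbol{B}}$ is the endomorphism of $\boldsymbol{B}_{\omega}^{\mathscr{F}_{n-1}}$ from Proposition~\ref{proposition-2.1}, and observe that on $(i,j,[0;k])\mathfrak{I}=\left(\begin{smallmatrix} i & \cdots & i+k \\ j & \cdots & j+k\end{smallmatrix}\right)$ it returns $(p+i,p+j,[0;k])\mathfrak{I}=\left(\begin{smallmatrix} p+i & \cdots & p+i+k \\ p+j & \cdots & p+j+k\end{smallmatrix}\right)$ and sends $\boldsymbol{0}$ to $\boldsymbol{0}$, i.e.\ it coincides with the map of the corollary; (3) conclude that $\mathfrak{e}_p$ of Corollary~\ref{corollary-2.2} equals $\mathfrak{I}^{-1}\circ\mathfrak{e}_p^{\,\boldsymbol{B}}\circ\mathfrak{I}$, hence is a composition of homomorphisms $\mathscr{I}_\omega^{n}(\overrightarrow{\operatorname{conv}})\to\boldsymbol{B}_{\omega}^{\mathscr{F}_{n-1}}\to\boldsymbol{B}_{\omega}^{\mathscr{F}_{n-1}}\to\mathscr{I}_\omega^{n}(\overrightarrow{\operatorname{conv}})$ and therefore an endomorphism of $\mathscr{I}_\omega^{n}(\overrightarrow{\operatorname{conv}})$.

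There is essentially no hard part here; the corollary is a formal consequence of Proposition~\ref{proposition-2.1} and the isomorphism $\mathfrak{I}$, as the lead-in sentence ``This and Proposition~\ref{proposition-2.1} imply the following corollary'' already signals. The only point requiring a line of care is bookkeeping of the rank shift: $\boldsymbol{B}_{\omega}^{\mathscr{F}_n}\cong\mathscr{I}_\omega^{n+1}(\overrightarrow{\operatorname{conv}})$, so the statement for $\mathscr{I}_\omega^{n}(\overrightarrow{\operatorname{conv}})$ corresponds to $\boldsymbol{B}_{\omega}^{\mathscr{F}_{n-1}}$, which is why the parameter in the display runs over $k=0,\ldots,n-1$ rather than $k=0,\ldots,n$. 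One should also remark that the map in the corollary is well defined, namely that it really sends $\mathscr{I}_\omega^{n}(\overrightarrow{\operatorname{conv}})$ into itself: applying the translation $x\mapsto p+x$ simultaneously to domain and range of a convex order isomorphism of rank $\le n$ again yields a convex order isomorphism of the same rank, so the target lands back in $\mathscr{I}_\omega^{n}(\overrightarrow{\operatorname{conv}})$; this is automatic from the fact that $\mathfrak{e}_p^{\,\boldsymbol{B}}$ maps $\boldsymbol{B}_{\omega}^{\mathscr{F}_{n-1}}$ to itself together with surjectivity of $\mathfrak{I}$ onto $\mathscr{I}_\omega^{n}(\overrightarrow{\operatorname{conv}})$.
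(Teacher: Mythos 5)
Your argument is exactly the one the paper intends: Corollary~\ref{corollary-2.2} is obtained by conjugating the endomorphism $\mathfrak{e}_p$ of Proposition~\ref{proposition-2.1} (applied with $n-1$ in place of $n$) by the isomorphism $\mathfrak{I}\colon\boldsymbol{B}_{\omega}^{\mathscr{F}_{n-1}}\to\mathscr{I}_\omega^{n}(\overrightarrow{\operatorname{conv}})$, and your bookkeeping of the rank shift and of the fact that the displayed formula only addresses elements of $\mathscr{I}_\omega^{n}(\overrightarrow{\operatorname{conv}})$ is correct. The proof is sound and matches the paper's (unwritten) justification.
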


Later we shall study endomorphisms of the semigroup $\mathscr{I}_\omega^{n}(\overrightarrow{\mathrm{conv}})$.

\begin{lemma}\label{lemma-2.3}
Let $n$ be any positive integer and $\frak{a}$ be an arbitrary non-annihilating endomorphism  of the semigroup $\mathscr{I}_\omega^{n}(\overrightarrow{\mathrm{conv}})$. Then $(\boldsymbol{0})\frak{a}=\boldsymbol{0}$.
\end{lemma}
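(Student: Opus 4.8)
The plan is to exploit the fact that $\boldsymbol{0}$ is the zero of $\mathscr{I}_\omega^{n}(\overrightarrow{\mathrm{conv}})$, i.e.\ $\boldsymbol{0}\cdot s=s\cdot\boldsymbol{0}=\boldsymbol{0}$ for all $s$, and that $\boldsymbol{0}$ is the unique element annihilating the whole semigroup. Since $\mathfrak{a}$ is a homomorphism, $(\boldsymbol{0})\mathfrak{a}$ must satisfy $(\boldsymbol{0})\mathfrak{a}\cdot(s)\mathfrak{a}=(\boldsymbol{0}\cdot s)\mathfrak{a}=(\boldsymbol{0})\mathfrak{a}$ and symmetrically $(s)\mathfrak{a}\cdot(\boldsymbol{0})\mathfrak{a}=(\boldsymbol{0})\mathfrak{a}$ for every $s\in\mathscr{I}_\omega^{n}(\overrightarrow{\mathrm{conv}})$. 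Thus $(\boldsymbol{0})\mathfrak{a}$ is a two-sided zero for the image subsemigroup $(\mathscr{I}_\omega^{n}(\overrightarrow{\mathrm{conv}}))\mathfrak{a}$. The goal is to upgrade this to being a zero for the whole semigroup, which forces $(\boldsymbol{0})\mathfrak{a}=\boldsymbol{0}$ by uniqueness.

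First I would show that $e:=(\boldsymbol{0})\mathfrak{a}$ is an idempotent (immediate, since $\boldsymbol{0}=\boldsymbol{0}\cdot\boldsymbol{0}$ gives $e=e\cdot e$), hence $e$ is an idempotent of the inverse semigroup $\mathscr{I}_\omega^{n}(\overrightarrow{\mathrm{conv}})$; the non-zero idempotents are precisely the partial identities $\varepsilon_{i,k}=\left(\begin{smallmatrix} i & \cdots & i+k \\ i & \cdots & i+k\end{smallmatrix}\right)$ on convex sets $[i;i+k]$ of size $k+1\le n$. Suppose for contradiction that $e\ne\boldsymbol{0}$, so $e=\varepsilon_{i,k}$ for some $i\in\omega$, $0\le k\le n-1$. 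The key step is then to produce an element $s$ of the semigroup with $s\cdot e\ne e$ (or $e\cdot s\ne e$), contradicting that $e$ must absorb everything in the image — but wait, that only gives a contradiction if $s$ lies in the image of $\mathfrak{a}$. So instead I would argue differently: I will use the non-annihilating hypothesis to find a non-zero element in the image, and then exploit the structure of how a non-zero idempotent interacts with elements to show that $e$ fails to be a zero for the image.

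More precisely: since $\mathfrak{a}$ is non-annihilating, there are $s_1,s_2$ with $(s_1)\mathfrak{a}\ne(s_2)\mathfrak{a}$, so the image is not a singleton; in particular the image contains a non-zero element $t=(s)\mathfrak{a}\ne e$ — here I would note that if the only element of the image different from $e$ could be $\boldsymbol{0}$, I would first rule out $e=\boldsymbol{0}$ directly, otherwise the image is $\{\boldsymbol{0}, e\}$ with $e\ne\boldsymbol{0}$ a zero, which is consistent, so this line needs care. The cleanest route: take any $s\in\mathscr{I}_\omega^{n}(\overrightarrow{\mathrm{conv}})$ with $(s)\mathfrak{a}=:t\ne\boldsymbol{0}$ guaranteed to exist if $e\ne\boldsymbol{0}$ (take $s$ with $(s)\mathfrak a = e$) or if $e=\boldsymbol 0$ use non-annihilation to get $t\ne\boldsymbol 0$; in the case $e\ne\boldsymbol 0$, pick $s_0$ with $(s_0)\mathfrak a\ne e$ — this exists by non-annihilation applied suitably — and derive a contradiction from $e\cdot(s_0)\mathfrak a=(s_0)\mathfrak a$ combined with $e=\varepsilon_{i,k}$ having rank $\le n-1 < n$: choosing $s_0$ of rank $n$ in $\mathscr{I}_\omega^{n}(\overrightarrow{\mathrm{conv}})$ whose image has rank $n$ would contradict $e\cdot(s_0)\mathfrak a$ having rank $\le\operatorname{rank}e\le n-1$. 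The main obstacle, and the place where I expect to spend the most effort, is precisely controlling the rank of elements in the image of $\mathfrak a$: I need to know that a non-annihilating endomorphism cannot collapse \emph{every} rank-$n$ element to something of rank $<n$, or alternatively find a purely idempotent-theoretic argument (e.g.\ comparing $e$ with the join/meet behaviour of the semilattice of idempotents, using that $E(\mathscr{I}_\omega^{n}(\overrightarrow{\mathrm{conv}}))$ has no zero absorbing it other than $\boldsymbol 0$) that avoids rank bookkeeping altogether. I would lead with the idempotent observation and the "$e$ is a zero of the image" observation, then close the gap with whichever of these two arguments turns out to be shorter.
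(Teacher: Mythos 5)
Your two opening observations are the same as the paper's: $e:=(\boldsymbol{0})\mathfrak{a}$ is an idempotent, and $e$ is a two-sided zero of the image subsemigroup. But the argument you propose to close with does not work, for two reasons. First, your rank computation rests on a miscalculation: you write $e\cdot(s_0)\mathfrak{a}=(s_0)\mathfrak{a}$ and hope to contradict $\operatorname{rank}(e\cdot(s_0)\mathfrak{a})\leqslant\operatorname{rank}e\leqslant n-1$, but in fact $e\cdot(s_0)\mathfrak{a}=(\boldsymbol{0}\cdot s_0)\mathfrak{a}=(\boldsymbol{0})\mathfrak{a}=e$ (you state this correctly in your first paragraph and then contradict it). The equation $e\cdot t=t\cdot e=e$ only says $e\preccurlyeq t$ for every idempotent $t$ in the image, which is perfectly consistent with any ranks; no contradiction follows. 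Second, and more fundamentally, the non-annihilating hypothesis by itself only tells you the image is not a singleton; it does not rule out, a priori, a small image such as $\{\boldsymbol{0},e\}$ or a finite chain of idempotents all lying above $e$, and you explicitly leave open how to "control the rank of elements in the image." That is exactly the gap.

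The paper closes this gap with a structural fact you do not invoke: by Theorem~3 of \cite{Gutik-Popadiuk=2022} (which rests on the semigroup admitting only Rees congruences), the image of any non-annihilating endomorphism is isomorphic to $\mathscr{I}_\omega^{m}(\overrightarrow{\mathrm{conv}})$ for some positive integer $m\leqslant n$, and therefore contains \emph{infinitely many} idempotents. Since $e$ is a zero of the image, all of these idempotents lie in ${\uparrow_{\preccurlyeq}}e$; but for a non-zero idempotent $e$ with $\operatorname{dom}e=[i;i+k]$ the set ${\uparrow_{\preccurlyeq}}e$ is finite (any idempotent above $e$ is a partial identity on a convex set of size at most $n$ containing $[i;i+k]$). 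This is the contradiction. Without some version of the "image is isomorphic to $\mathscr{I}_\omega^{m}(\overrightarrow{\mathrm{conv}})$" theorem, or another argument producing idempotents in the image not lying above $e$, your proof cannot be completed as sketched.
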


\begin{proof}
Since $\boldsymbol{0}$ is an idempotent of $\mathscr{I}_\omega^{n}(\overrightarrow{\mathrm{conv}})$, so is the image $(\boldsymbol{0})\frak{a}$. Suppose to the contrary that $(\boldsymbol{0})\frak{a}=\boldsymbol{e}\neq\boldsymbol{0}$. By Theorem~3 from \cite{Gutik-Popadiuk=2022} the image of $\mathscr{I}_\omega^{n}(\overrightarrow{\mathrm{conv}})$ under the endomorphism $\frak{a}$ is isomorphic to the semigroup $\mathscr{I}_\omega^{m}(\overrightarrow{\mathrm{conv}})$ for some positive integer $m\leqslant n$. Hence the subsemigroup $(\mathscr{I}_\omega^{n}(\overrightarrow{\mathrm{conv}}))\frak{a}$ of $\mathscr{I}_\omega^{n}(\overrightarrow{\mathrm{conv}})$ has infinitely many  idempotents. But by Theorem~1 and Lemma~1 from \cite{Gutik-Popadiuk=2022} the set ${\uparrow_{\preccurlyeq}}\boldsymbol{e}$ is finite, a contradiction. The obtained contradiction implies the equality $(\boldsymbol{0})\frak{a}=\boldsymbol{0}$.
\end{proof}

Lemma~\ref{lemma-2.3} implies the following corollary.

\begin{corollary}\label{corollary-2.4}
Let $n$ be any positive integer and $\frak{a}$ be an endomorphism  of the semigroup $\mathscr{I}_\omega^{n}(\overrightarrow{\mathrm{conv}})$. If $(\boldsymbol{0})\frak{a}\neq\boldsymbol{0}$ then  $\frak{a}$ is annihilating.
\end{corollary}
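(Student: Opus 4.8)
The plan is to observe that Corollary~\ref{corollary-2.4} is nothing more than the contrapositive of Lemma~\ref{lemma-2.3}, so no new argument is needed. First I would recall that, by the definitions given in the introduction, every endomorphism $\mathfrak{a}$ of $\mathscr{I}_\omega^{n}(\overrightarrow{\mathrm{conv}})$ is either annihilating or non-annihilating, and these two possibilities are mutually exclusive and exhaustive. Then I would argue directly via the contrapositive: assume $(\boldsymbol{0})\mathfrak{a}\neq\boldsymbol{0}$ and suppose, towards a contradiction, that $\mathfrak{a}$ is \emph{not} annihilating. Then $\mathfrak{a}$ is a non-annihilating endomorphism of $\mathscr{I}_\omega^{n}(\overrightarrow{\mathrm{conv}})$, whence Lemma~\ref{lemma-2.3} yields $(\boldsymbol{0})\mathfrak{a}=\boldsymbol{0}$, contradicting the hypothesis. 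Therefore $\mathfrak{a}$ must be annihilating.

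There is no genuine obstacle here: all the real work has already been carried out in Lemma~\ref{lemma-2.3}, whose content is that a non-annihilating endomorphism has image isomorphic to some $\mathscr{I}_\omega^{m}(\overrightarrow{\mathrm{conv}})$ (Theorem~3 of~\cite{Gutik-Popadiuk=2022}) and hence possesses infinitely many idempotents, which is incompatible with those idempotents lying in the finite set ${\uparrow_{\preccurlyeq}}\boldsymbol{e}$ above a putative nonzero image $\boldsymbol{e}=(\boldsymbol{0})\mathfrak{a}$. Consequently, I would simply invoke Lemma~\ref{lemma-2.3} and conclude in one line.
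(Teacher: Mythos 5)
Your proof is correct and matches the paper exactly: the paper derives Corollary~\ref{corollary-2.4} as an immediate consequence (contrapositive) of Lemma~\ref{lemma-2.3}, with no additional argument. Nothing further is needed.
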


\begin{lemma}\label{lemma-2.5}
Let $n$ be any positive integer $\geqslant 2$ and $\frak{a}$ be an arbitrary non-annihilating endomorphism  of the semigroup $\mathscr{I}_\omega^{n}(\overrightarrow{\mathrm{conv}})$. If
$\left(
\begin{smallmatrix}
  0\\
  0
\end{smallmatrix}
\right)\frak{a}=
\left(
\begin{smallmatrix}
  0\\
  0
\end{smallmatrix}
\right)$
then $\frak{a}$ is the identity automorphism of $\mathscr{I}_\omega^{n}(\overrightarrow{\mathrm{conv}})$.
\end{lemma}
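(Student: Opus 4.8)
The plan is to show that an arbitrary non-annihilating endomorphism $\frak{a}$ fixing $\left(\begin{smallmatrix}0\\0\end{smallmatrix}\right)$ must fix every generator, and hence be the identity. First I would analyse the idempotent $\left(\begin{smallmatrix}0\\0\end{smallmatrix}\right)$, which is the unique maximal idempotent of $\mathscr{I}_\omega^{n}(\overrightarrow{\mathrm{conv}})$ having $\operatorname{rank}1$ sitting at the ``origin'': by Lemma~1 of \cite{Gutik-Popadiuk=2022} the set ${\uparrow_{\preccurlyeq}}\left(\begin{smallmatrix}0\\0\end{smallmatrix}\right)$ consists precisely of the idempotents $\left(\begin{smallmatrix}0&1&\cdots&k\\0&1&\cdots&k\end{smallmatrix}\right)$ for $k=0,\ldots,n-1$, a chain of length $n$. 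Since $\frak{a}$ is a homomorphism of semilattices on the band, it preserves $\preccurlyeq$, so $\frak{a}$ maps this chain into the chain ${\uparrow_{\preccurlyeq}}\left(\begin{smallmatrix}0\\0\end{smallmatrix}\right)$ of the same length $n$; as $\frak{a}$ restricted to this chain is order-preserving with the bottom fixed and the image is inside a chain of the same finite length, I would argue (using that $\frak{a}$ is non-annihilating, hence by Lemma~\ref{lemma-2.3} and Theorem~3 of \cite{Gutik-Popadiuk=2022} its image is some $\mathscr{I}_\omega^{m}(\overrightarrow{\mathrm{conv}})$ with $m\leqslant n$, together with a counting/rank argument) that this restriction is in fact the identity. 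In particular $\left(\begin{smallmatrix}0&1\\0&1\end{smallmatrix}\right)\frak{a}=\left(\begin{smallmatrix}0&1\\0&1\end{smallmatrix}\right)$, and more generally every idempotent in that initial chain is fixed.

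Next I would pin down the images of the two ``unit shift'' elements $\alpha=\left(\begin{smallmatrix}0\\1\end{smallmatrix}\right)$ and $\beta=\left(\begin{smallmatrix}1\\0\end{smallmatrix}\right)=\alpha^{-1}$, which generate $\mathscr{I}_\omega^{1}(\overrightarrow{\mathrm{conv}})$-type behaviour but more importantly generate, together with $\left(\begin{smallmatrix}0&1\\0&1\end{smallmatrix}\right)$, the whole semigroup $\mathscr{I}_\omega^{n}(\overrightarrow{\mathrm{conv}})$ (every non-zero element is a product $\beta^{i}\cdot\left(\begin{smallmatrix}0&\cdots&k\\0&\cdots&k\end{smallmatrix}\right)\cdot\alpha^{j}$ for suitable $i,j,k$). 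Since $\alpha\alpha^{-1}=\left(\begin{smallmatrix}0\\0\end{smallmatrix}\right)$ and $\alpha^{-1}\alpha=\left(\begin{smallmatrix}1\\1\end{smallmatrix}\right)$, applying $\frak{a}$ and using that $\left(\begin{smallmatrix}0\\0\end{smallmatrix}\right)$ is fixed forces $(\alpha)\frak{a}$ to be an element of rank $1$ whose domain idempotent is $\left(\begin{smallmatrix}0\\0\end{smallmatrix}\right)$; the only such elements are $\left(\begin{smallmatrix}0\\j\end{smallmatrix}\right)$, $j\in\omega$. I would then use the relation $\left(\begin{smallmatrix}0&1\\0&1\end{smallmatrix}\right)=\left(\begin{smallmatrix}0\\0\end{smallmatrix}\right)\vee\bigl(\alpha\left(\begin{smallmatrix}0\\0\end{smallmatrix}\right)\alpha^{-1}\bigr)$ — more concretely, $\left(\begin{smallmatrix}0\\1\end{smallmatrix}\right)\left(\begin{smallmatrix}0\\1\end{smallmatrix}\right)^{-1}\cdot$(something) — together with the already-established fixing of $\left(\begin{smallmatrix}1\\1\end{smallmatrix}\right)$ and $\left(\begin{smallmatrix}0&1\\0&1\end{smallmatrix}\right)$ to show $j=1$, i.e. $(\alpha)\frak{a}=\alpha$, and symmetrically $(\beta)\frak{a}=\beta$. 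Indeed $\alpha^{-1}\alpha=\left(\begin{smallmatrix}1\\1\end{smallmatrix}\right)$ gets sent to $\left(\begin{smallmatrix}j\\j\end{smallmatrix}\right)$, which must equal $\left(\begin{smallmatrix}1\\1\end{smallmatrix}\right)$ once we know $\left(\begin{smallmatrix}1\\1\end{smallmatrix}\right)$ is fixed — and $\left(\begin{smallmatrix}1\\1\end{smallmatrix}\right)$ is fixed because it is the unique idempotent of rank $1$ with $\left(\begin{smallmatrix}1\\1\end{smallmatrix}\right)\preccurlyeq\left(\begin{smallmatrix}0&1\\0&1\end{smallmatrix}\right)$ and incomparable to $\left(\begin{smallmatrix}0\\0\end{smallmatrix}\right)$, a property preserved by $\frak{a}$ since the relevant idempotents are fixed.

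Finally, since $\frak{a}$ fixes a generating set $\{\left(\begin{smallmatrix}0\\1\end{smallmatrix}\right),\left(\begin{smallmatrix}1\\0\end{smallmatrix}\right),\left(\begin{smallmatrix}0&1\\0&1\end{smallmatrix}\right)\}$ of $\mathscr{I}_\omega^{n}(\overrightarrow{\mathrm{conv}})$ (one should check this generation claim, which is elementary from the explicit form of non-zero elements together with $\left(\begin{smallmatrix}0&\cdots&k\\0&\cdots&k\end{smallmatrix}\right)=\left(\begin{smallmatrix}0\\0\end{smallmatrix}\right)\cdots$ built from $\alpha,\beta$), and since $\frak{a}$ is a homomorphism, $\frak{a}$ is the identity on all of $\mathscr{I}_\omega^{n}(\overrightarrow{\mathrm{conv}})$; being surjective and injective it is the identity automorphism. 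The main obstacle I anticipate is the middle step: cleanly forcing $(\alpha)\frak{a}=\alpha$ rather than merely $(\alpha)\frak{a}=\left(\begin{smallmatrix}0\\j\end{smallmatrix}\right)$ for some $j\geqslant 1$ — here the hypothesis $n\geqslant 2$ is essential, since it guarantees the chain ${\uparrow_{\preccurlyeq}}\left(\begin{smallmatrix}0\\0\end{smallmatrix}\right)$ has length $\geqslant 2$, providing the fixed idempotent $\left(\begin{smallmatrix}0&1\\0&1\end{smallmatrix}\right)$ needed to ``read off'' the shift amount $j$ and conclude $j=1$. For $n=1$ this leverage disappears, which is exactly why the case $n=1$ behaves differently.
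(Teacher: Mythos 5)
Your first stage (fixing the idempotents of the chain ${\uparrow_{\preccurlyeq}}\left(\begin{smallmatrix}0\\0\end{smallmatrix}\right)$ and then $\left(\begin{smallmatrix}1\\1\end{smallmatrix}\right)$) is essentially the paper's opening move, but the decisive ingredient there is not a counting argument: an order-preserving self-map of a finite chain fixing the bottom need not be the identity. What the paper actually uses is that every congruence on $\mathscr{I}_\omega^{n}(\overrightarrow{\mathrm{conv}})$ is Rees, so if $\frak{a}$ identified two comparable idempotents it would collapse the ideal between them and in particular violate $(\boldsymbol{0})\frak{a}=\boldsymbol{0}$ from Lemma~\ref{lemma-2.3}; you should make that explicit. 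The paper must then continue the induction along all of $\omega$ to fix \emph{every} idempotent, and finally treats an arbitrary non-idempotent $\boldsymbol{x}$ by noting that $\boldsymbol{x}\boldsymbol{x}^{-1}$ and $\boldsymbol{x}^{-1}\boldsymbol{x}$ are fixed idempotents which determine $\operatorname{dom}((\boldsymbol{x})\frak{a})$ and $\operatorname{ran}((\boldsymbol{x})\frak{a})$, and that a convex order isomorphism is uniquely determined by its domain and range.

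The reason you cannot avoid that global work is that your final step rests on a false generation claim. The set $\left\{\left(\begin{smallmatrix}0\\1\end{smallmatrix}\right),\left(\begin{smallmatrix}1\\0\end{smallmatrix}\right),\left(\begin{smallmatrix}0&1\\0&1\end{smallmatrix}\right)\right\}$ does not generate $\mathscr{I}_\omega^{n}(\overrightarrow{\mathrm{conv}})$. First, $\left(\begin{smallmatrix}0\\1\end{smallmatrix}\right)^{2}=\boldsymbol{0}$, since $1$ is not in the domain $\{0\}$; so the proposed normal form $\beta^{i}\cdot e\cdot\alpha^{j}$ degenerates to $\boldsymbol{0}$ for $i,j\geqslant2$ --- the ``shift'' you have in mind is the bicyclic one, which has infinite rank and does not lie in this semigroup. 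Second, for partial bijections $\operatorname{rank}(\gamma\delta)\leqslant\min\{\operatorname{rank}\gamma,\operatorname{rank}\delta\}$, so no product of your three generators can reach rank $\geqslant3$; worse, $\operatorname{dom}(\gamma\delta)\subseteq\operatorname{dom}\gamma$ and $\operatorname{ran}(\gamma\delta)\subseteq\operatorname{ran}\delta$, so the subsemigroup they generate consists only of partial bijections of $\{0,1\}$, a finite set, and never reaches, say, $\left(\begin{smallmatrix}5\\5\end{smallmatrix}\right)$. The same domain argument shows that $\mathscr{I}_\omega^{n}(\overrightarrow{\mathrm{conv}})$ is not finitely generated at all, so no ``fix a finite generating set'' strategy can succeed; you must propagate the identity across the whole idempotent semilattice by induction and then use the $\boldsymbol{x}\boldsymbol{x}^{-1}$, $\boldsymbol{x}^{-1}\boldsymbol{x}$ device to finish.
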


\begin{proof}
First we shall show that the restriction of the endomorphism $\frak{a}$ onto the band $E(\mathscr{I}_\omega^{n}(\overrightarrow{\mathrm{conv}}))$   is the identity map of $E(\mathscr{I}_\omega^{n}(\overrightarrow{\mathrm{conv}}))$.

The definition of the natural partial order $\preccurlyeq$ on $E(\mathscr{I}_\omega^{n}(\overrightarrow{\mathrm{conv}}))$ implies that
\begin{equation*}
  {\uparrow_{\preccurlyeq}}
  \left(
\begin{smallmatrix}
  0\\
  0
\end{smallmatrix}
\right)=
\left\{
\left(
\begin{smallmatrix}
  0\\
  0
\end{smallmatrix}
\right),
\left(
\begin{smallmatrix}
  0 & 1\\
  0 & 1
\end{smallmatrix}
\right),
\ldots,
\left(
\begin{smallmatrix}
  0 & 1 & \cdots & n-1\\
  0 & 1 & \cdots & n-1
\end{smallmatrix}
\right)
\right\}.
\end{equation*}
By Proposition~1.14.21(6) of \cite{Lawson-1998} every homomorphism of inverse semigroups preserves the natural partial order, and hence
$
\left(
\begin{smallmatrix}
  0\\
  0
\end{smallmatrix}
\right)\frak{a}
\preccurlyeq
\left(
\begin{smallmatrix}
  0 & 1\\
  0 & 1
\end{smallmatrix}
\right)\frak{a}
$, because
$
\left(
\begin{smallmatrix}
  0\\
  0
\end{smallmatrix}
\right)
\preccurlyeq
\left(
\begin{smallmatrix}
  0 & 1\\
  0 & 1
\end{smallmatrix}
\right)
$.
Also, by Proposition~4 of \cite{Gutik-Popadiuk=2022} every congruence on the semigroup $\mathscr{I}_\omega^{n}(\overrightarrow{\mathrm{conv}})$ is Rees, which implies that
$
\left(
\begin{smallmatrix}
  0\\
  0
\end{smallmatrix}
\right)\frak{a}
\neq
\left(
\begin{smallmatrix}
  0 & 1\\
  0 & 1
\end{smallmatrix}
\right)\frak{a}
$.
Hence we obtain that
$
\left(
\begin{smallmatrix}
  0 & 1\\
  0 & 1
\end{smallmatrix}
\right)\frak{a}=
\left(
\begin{smallmatrix}
  0 & 1\\
  0 & 1
\end{smallmatrix}
\right)
$. Similarly by induction we get that
$
\left(
\begin{smallmatrix}
  0 & 1 & \cdots & k\\
  0 & 1 & \cdots & k
\end{smallmatrix}
\right)\frak{a}=
\left(
\begin{smallmatrix}
  0 & 1 & \cdots & k\\
  0 & 1 & \cdots & k
\end{smallmatrix}
\right)
$
for any $k=2,\ldots, n-1$.

The definition of the natural partial order $\preccurlyeq$ on $E(\mathscr{I}_\omega^{n}(\overrightarrow{\mathrm{conv}}))$ implies that
$
\boldsymbol{0}\preccurlyeq
\left(
\begin{smallmatrix}
  1\\
  1
\end{smallmatrix}
\right)
\preccurlyeq
\left(
\begin{smallmatrix}
  0 & 1\\
  0 & 1
\end{smallmatrix}
\right)
$.
The above part of the proof, Lemma~\ref{lemma-2.3} and Proposition~4 of \cite{Gutik-Popadiuk=2022} imply that
\begin{equation*}
\boldsymbol{0}=(\boldsymbol{0})\frak{a}\preccurlyeq
\left(
\begin{smallmatrix}
  1\\
  1
\end{smallmatrix}
\right)\frak{a}
\preccurlyeq
\left(
\begin{smallmatrix}
  0 & 1\\
  0 & 1
\end{smallmatrix}
\right)\frak{a}=
\left(
\begin{smallmatrix}
  0 & 1\\
  0 & 1
\end{smallmatrix}
\right)
.
\end{equation*}
Again, by the definition of the natural partial order $\preccurlyeq$ on $E(\mathscr{I}_\omega^{n}(\overrightarrow{\mathrm{conv}}))$ we have that the inequalities $
\boldsymbol{0}\preccurlyeq
\boldsymbol{x}
\preccurlyeq
\left(
\begin{smallmatrix}
  0 & 1\\
  0 & 1
\end{smallmatrix}
\right)
$
have two solutions either $\boldsymbol{x}=
\left(
\begin{smallmatrix}
  0\\
  0
\end{smallmatrix}
\right)$
or
$\boldsymbol{x}=
\left(
\begin{smallmatrix}
  1\\
  1
\end{smallmatrix}
\right)$.
Then Proposition~4 of \cite{Gutik-Popadiuk=2022} implies that
$
\left(
\begin{smallmatrix}
  1\\
  1
\end{smallmatrix}
\right)\frak{a}=
\left(
\begin{smallmatrix}
  1\\
  1
\end{smallmatrix}
\right)
$.
Similar arguments and the following conditions
\begin{equation*}
  \left(
\begin{smallmatrix}
  1\\
  1
\end{smallmatrix}
\right)=
\left(
\begin{smallmatrix}
  1\\
  1
\end{smallmatrix}
\right)\frak{a}\preccurlyeq
\left(
\begin{smallmatrix}
  1 & 2\\
  1 & 2
\end{smallmatrix}
\right)\frak{a}\preccurlyeq
\left(
\begin{smallmatrix}
  0 & 1 & 2\\
  0 & 1 & 2
\end{smallmatrix}
\right)\frak{a}=
\left(
\begin{smallmatrix}
  0 & 1 & 2\\
  0 & 1 & 2
\end{smallmatrix}
\right)
\end{equation*}
imply that
$
\left(
\begin{smallmatrix}
  1 & 2\\
  1 & 2
\end{smallmatrix}
\right)\frak{a}=
\left(
\begin{smallmatrix}
  1 & 2\\
  1 & 2
\end{smallmatrix}
\right)
$. Next by induction we get that
$
\left(
\begin{smallmatrix}
  1 & 2 & \cdots & k+1\\
  1 & 2 & \cdots & k+1
\end{smallmatrix}
\right)\frak{a}=
\left(
\begin{smallmatrix}
  1 & 2 & \cdots & k+1\\
  1 & 2 & \cdots & k+1
\end{smallmatrix}
\right)
$
for any $k=2,\ldots, n-1$.

We observe that the proof of the step of induction: \emph{the equalities }
\begin{equation*}
\left(
\begin{smallmatrix}
  p\\
  p
\end{smallmatrix}
\right)=
\left(
\begin{smallmatrix}
  p\\
  p
\end{smallmatrix}
\right)\frak{a}, \quad
\left(
\begin{smallmatrix}
  p & p+1\\
  p & p+1
\end{smallmatrix}
\right)=
\left(
\begin{smallmatrix}
  p & p+1\\
  p & p+1
\end{smallmatrix}
\right)\frak{a}, \quad \ldots \quad, \quad
\left(
\begin{smallmatrix}
  p & p+1 & \cdots & p+n-1\\
  p & p+1 & \cdots & p+n-1
\end{smallmatrix}
\right)=
\left(
\begin{smallmatrix}
  p & p+1 & \cdots & p+n-1\\
  p & p+1 & \cdots & p+n-1
\end{smallmatrix}
\right)\frak{a}
\end{equation*}
\emph{hold for $p\leqslant m$, imply that these equalities hold for $p= m+1$,} is similar to the above part of the proof.

Fix an arbitrary $\boldsymbol{x}\in \mathscr{I}_\omega^{n}(\overrightarrow{\mathrm{conv}})\setminus E(\mathscr{I}_\omega^{n}(\overrightarrow{\mathrm{conv}}))$ with $\operatorname{rank}\boldsymbol{x}=k$, $k=1,\ldots,n$. Since $\boldsymbol{x}$ is a partial convex order isomorphism of $(\omega,\leqslant)$, there exist $s,t\in\omega$ such that $\boldsymbol{x}=
\left(
\begin{smallmatrix}
  s & s+1 & \cdots & s+k-1\\
  t & t+1 & \cdots & t+k-1
\end{smallmatrix}
\right)
$. Since $\boldsymbol{x}\boldsymbol{x}^{-1}, \boldsymbol{x}^{-1}\boldsymbol{x}\in E(\mathscr{I}_\omega^{n}(\overrightarrow{\mathrm{conv}}))$, by Proposition~1.14.21(1) of \cite{Lawson-1998} we have that
\begin{align*}
  (\boldsymbol{x})\frak{a} \cdot ((\boldsymbol{x})\frak{a})^{-1}&= (\boldsymbol{x})\frak{a} \cdot (\boldsymbol{x}^{-1})\frak{a}=\\
   &=(\boldsymbol{x}\boldsymbol{x}^{-1})\frak{a}=\\
   &=\boldsymbol{x}\boldsymbol{x}^{-1}=\\
   &=
\left(
\begin{smallmatrix}
  s & s+1 & \cdots & s+k-1\\
  t & t+1 & \cdots & t+k-1
\end{smallmatrix}
\right)
\left(
\begin{smallmatrix}
  s & s+1 & \cdots & s+k-1\\
  t & t+1 & \cdots & t+k-1
\end{smallmatrix}
\right)^{-1}=   \\
   &=
\left(
\begin{smallmatrix}
  s & s+1 & \cdots & s+k-1\\
  t & t+1 & \cdots & t+k-1
\end{smallmatrix}
\right)
\left(
\begin{smallmatrix}
  t & t+1 & \cdots & t+k-1\\
  s & s+1 & \cdots & s+k-1
\end{smallmatrix}
\right)=   \\
 &=
\left(
\begin{smallmatrix}
  s & s+1 & \cdots & s+k-1\\
  s & s+1 & \cdots & s+k-1
\end{smallmatrix}
\right)
\end{align*}
and
\begin{align*}
  ((\boldsymbol{x})\frak{a})^{-1}\cdot(\boldsymbol{x})\frak{a}&= (\boldsymbol{x}^{-1})\frak{a} \cdot (\boldsymbol{x})\frak{a}=\\
   &=(\boldsymbol{x}^{-1}\boldsymbol{x})\frak{a}=\\
   &=\boldsymbol{x}^{-1}\boldsymbol{x}=\\
   &=
\left(
\begin{smallmatrix}
  s & s+1 & \cdots & s+k-1\\
  t & t+1 & \cdots & t+k-1
\end{smallmatrix}
\right)^{-1}
\left(
\begin{smallmatrix}
  s & s+1 & \cdots & s+k-1\\
  t & t+1 & \cdots & t+k-1
\end{smallmatrix}
\right)=   \\
   &=
\left(
\begin{smallmatrix}
  t & t+1 & \cdots & t+k-1\\
  s & s+1 & \cdots & s+k-1
\end{smallmatrix}
\right)
\left(
\begin{smallmatrix}
  s & s+1 & \cdots & s+k-1\\
  t & t+1 & \cdots & t+k-1
\end{smallmatrix}
\right)=   \\
 &=
\left(
\begin{smallmatrix}
  t & t+1 & \cdots & t+k-1\\
  t & t+1 & \cdots & t+k-1
\end{smallmatrix}
\right).
\end{align*}
The above equalities imply that
\begin{equation*}
\operatorname{dom}((\boldsymbol{x})\frak{a})=\operatorname{dom}((\boldsymbol{x})\frak{a}\cdot ((\boldsymbol{x})\frak{a})^{-1})=\{s, \ldots, s+k-1\}
\end{equation*}
and
\begin{equation*}
\operatorname{ran}((\boldsymbol{x})\frak{a})=\operatorname{dom}(((\boldsymbol{x})\frak{a})^{-1}\cdot(\boldsymbol{x})\frak{a})=\{t, \ldots, t+k-1\}.
\end{equation*}
Since $(\boldsymbol{x})\frak{a}$ is a partial convex order isomorphism of $(\omega,\leqslant)$, we get that $(\boldsymbol{x})\frak{a}=
\left(
\begin{smallmatrix}
  s & s+1 & \cdots & s+k-1\\
  t & t+1 & \cdots & t+k-1
\end{smallmatrix}
\right)
$, which completes the proof of the lemma.
\end{proof}

For visually simplify of the proof of Theorem~\ref{theorem-2.6}, we schematically present the natural partial order on the semilattice $E(\mathscr{I}_\omega^{n}(\overrightarrow{\mathrm{conv}}))$ on Figure~\ref{fig-2.1}.

\begin{figure}[h]
\vskip.1cm
\begin{center}
\tiny{
\begin{tikzpicture}[scale=.725]%[domain=-2.8:2.8,scale=1]
%%%%%%%%%%%%%%%%%%%%%%%%%%%%%%%%%%%%%%%%
\draw (0,-4) node {$\boldsymbol{0}$};
%%%%%%%%%%%%%%%%%%%%%%%%%%%%%%%%%%%%%%%%
\draw (0,0) node {$
\left(
\begin{smallmatrix}
  0 \\
  0
\end{smallmatrix}
\right)$};
\draw[>=latex,->,thick] (0,-.3) -- (0,-3.7);
\draw (3,0) node {$
\left(
\begin{smallmatrix}
  1 \\
  1
\end{smallmatrix}
\right)
$};
\draw[>=latex,->,thick] (3,-.3) -- (0.2,-3.7);
\draw (6,0) node {$
\left(
\begin{smallmatrix}
  2 \\
  2
\end{smallmatrix}
\right)$};
\draw[>=latex,->,thick] (6,-.3) -- (0.3,-3.75);
\draw (9,0) node {$
\left(
\begin{smallmatrix}
  3 \\
  3
\end{smallmatrix}
\right)$};
\draw[>=latex,->,thick] (9,-.3) -- (0.45,-3.8);
\draw (12,0) node {$\boldsymbol{\cdots}$};
\draw (8.,-1.5) node {$\boldsymbol{\cdots}$};
\draw[>=latex,->,thick] (14.3,-.3) -- (0.6,-3.85);
\draw (15,0) node {$
\left(
\begin{smallmatrix}
  i_0-1 \\
  i_0-1
\end{smallmatrix}
\right)$};
\draw (18,0) node {$
\left(
\begin{smallmatrix}
  i_0 \\
  i_0
\end{smallmatrix}
\right)$};
\draw[>=latex,->,thick] (17.5,-.3) -- (0.8,-3.9);
\draw (21,0) node {$
\left(
\begin{smallmatrix}
  i_0+1 \\
  i_0+1
\end{smallmatrix}
\right)$};
\draw[>=latex,->,thick] (20.3,-.3) -- (0.99,-3.97);
\draw (24,0) node {$\boldsymbol{\cdots}$};
\draw (18.,-1.5) node {$\boldsymbol{\cdots}$};
%%%%%%%%%%%%%%%%%%%%%%%%%%%%%%%%%%%%%%%%
\draw (0,2) node {$
\left(
\begin{smallmatrix}
  0 & 1 \\
  0 & 1
\end{smallmatrix}
\right)$};
\draw[>=latex,->,thick] (0,1.7) -- (0,0.3);
\draw[>=latex,->,thick] (0.3,1.7) -- (2.7,0.3);
\draw (3,2) node {$
\left(
\begin{smallmatrix}
  1 & 2 \\
  1 & 2
\end{smallmatrix}
\right)$};
\draw[>=latex,->,thick] (3,1.7) -- (3,0.3);
\draw[>=latex,->,thick] (3.3,1.7) -- (5.7,0.3);
\draw (6,2) node {$
\left(
\begin{smallmatrix}
  2 & 3 \\
  2 & 3
\end{smallmatrix}
\right)$};
\draw[>=latex,->,thick] (6,1.7) -- (6,0.3);
\draw[>=latex,->,thick] (6.3,1.7) -- (8.7,0.3);
\draw (9,2) node {$
\left(
\begin{smallmatrix}
  3 & 4 \\
  3 & 4
\end{smallmatrix}
\right)$};
\draw[>=latex,->,thick] (9,1.7) -- (9,0.3);

\draw[>=latex,->,thick] (9.3,1.7) -- (10.6,0.9);
\draw (12,2) node {$\boldsymbol{\cdots}$};
\draw[>=latex,->,thick] (13.3,1.) -- (14.6,0.32);
\draw (15,2) node {$
\left(
\begin{smallmatrix}
  i_0{-}1 & i_0 \\
  i_0{-}1 & i_0
\end{smallmatrix}
\right)$};
\draw[>=latex,->,thick] (15.3,1.6) -- (17.7,0.3);
\draw[>=latex,->,thick] (15,1.6) -- (15,0.3);
\draw (18,2) node {$
\left(
\begin{smallmatrix}
  i_0 & i_0{+}1 \\
  i_0 & i_0{+}1
\end{smallmatrix}
\right)$};
\draw[>=latex,->,thick] (18,1.6) -- (18,0.3);
\draw[>=latex,->,thick] (18.3,1.6) -- (20.6,0.3);
\draw (21,2) node {$
\left(
\begin{smallmatrix}
  i_0{+}1 & i_0{+}2 \\
  i_0{+}1 & i_0{+}2
\end{smallmatrix}
\right)$};
\draw[>=latex,->,thick] (21,1.6) -- (21,0.3);
\draw[>=latex,->,thick] (21.3,1.6) -- (22.6,0.9);
\draw (24,2) node {$\boldsymbol{\cdots}$};
%%%%%%%%%%%%%%%%%%%%%%%%%%%%%%%%%%%%%%%%%
\draw (0,4) node {$
\left(
\begin{smallmatrix}
  0 & 1 & 2\\
  0 & 1 & 2
\end{smallmatrix}
\right)$};
\draw[>=latex,->,thick] (0,3.7) -- (0,2.3);
\draw[>=latex,->,thick] (0.3,3.7) -- (2.7,2.3);
\draw (3,4) node {$
\left(
\begin{smallmatrix}
  1 & 2 & 3\\
  1 & 2 & 3
\end{smallmatrix}
\right)$};
\draw[>=latex,->,thick] (3,3.7) -- (3,2.3);
\draw[>=latex,->,thick] (3.3,3.7) -- (5.7,2.3);
\draw (6,4) node {$
\left(
\begin{smallmatrix}
  2 & 3 & 4\\
  2 & 3 & 4
\end{smallmatrix}
\right)$};
\draw[>=latex,->,thick] (6,3.7) -- (6,2.3);
\draw[>=latex,->,thick] (6.3,3.7) -- (8.7,2.3);
\draw (9,4) node {$
\left(
\begin{smallmatrix}
  3 & 4 & 5\\
  3 & 4 & 5
\end{smallmatrix}
\right)$};
\draw[>=latex,->,thick] (9,3.7) -- (9,2.3);
\draw[>=latex,->,thick] (9.3,3.7) -- (10.6,2.9);
\draw (12,4) node {$\boldsymbol{\cdots}$};
\draw[>=latex,->,thick] (13.3,3.) -- (14.6,2.3);
\draw (15,4) node {$
\left(\!
\begin{smallmatrix}
  i_0{-}1 & i_0 & i_0{+}1\\
  i_0{-}1 & i_0 & i_0{+}1
\end{smallmatrix}
\!\right)$};
\draw[>=latex,->,thick] (15,3.6) -- (15,2.3);
\draw[>=latex,->,thick] (15.3,3.6) -- (17.7,2.3);
\draw (18,4) node {$
\left(\!
\begin{smallmatrix}
  i_0 & i_0{+}1 & i_0{+}2\\
  i_0 & i_0{+}1 & i_0{+}2
\end{smallmatrix}
\!\right)$};
\draw[>=latex,->,thick] (18,3.6) -- (18,2.3);
\draw[>=latex,->,thick] (18.3,3.6) -- (20.6,2.3);
\draw (21,4) node {$
\left(\!
\begin{smallmatrix}
  i_0{+}1 & i_0{+}2 & i_0{+}3\\
  i_0{+}1 & i_0{+}2 & i_0{+}3
\end{smallmatrix}
\!\right)$};
\draw[>=latex,->,thick] (21,3.6) -- (21,2.3);
\draw[>=latex,->,thick] (21.3,3.6) -- (22.6,2.9);
\draw (24,4) node {$\boldsymbol{\cdots}$};
%%%%%%%%%%%%%%%%%%%%%%%%%%%%%%%%%%%%%%%%%
\draw (0,6) node {$
\left(
\begin{smallmatrix}
  0 & 1 & 2 & 3\\
  0 & 1 & 2 & 3
\end{smallmatrix}
\right)$};
\draw[>=latex,->,thick] (0,5.7) -- (0,4.3);
\draw[>=latex,->,thick] (0.3,5.7) -- (2.7,4.3);
\draw (3,6) node {$
\left(
\begin{smallmatrix}
  1 & 2 & 3 & 4\\
  1 & 2 & 3 & 4
\end{smallmatrix}
\right)$};
\draw[>=latex,->,thick] (3,5.7) -- (3,4.3);
\draw[>=latex,->,thick] (3.3,5.7) -- (5.7,4.3);
\draw (6,6) node {$
\left(
\begin{smallmatrix}
  2 & 3 & 4 & 5\\
  2 & 3 & 4 & 5
\end{smallmatrix}
\right)$};
\draw[>=latex,->,thick] (6,5.7) -- (6,4.3);
\draw[>=latex,->,thick] (6.3,5.7) -- (8.7,4.3);
\draw (9,6) node {$
\left(
\begin{smallmatrix}
  3 & 4 & 5 & 6\\
  3 & 4 & 5 & 6
\end{smallmatrix}
\right)$};
\draw[>=latex,->,thick] (9,5.7) -- (9,4.3);
\draw[>=latex,->,thick] (9.3,5.7) -- (10.6,4.9);
\draw (12,6) node {$\boldsymbol{\cdots}$};
\draw[>=latex,->,thick] (13.3,5.) -- (14.6,4.3);
\draw (15,6) node {$
\left(\!
\begin{smallmatrix}
  i_0{-}1 & \cdots & i_0{+}2\\
  i_0{-}1 & \cdots & i_0{+}2
\end{smallmatrix}
\!\right)$};
\draw[>=latex,->,thick] (15,5.6) -- (15,4.3);
\draw[>=latex,->,thick] (15.3,5.6) -- (17.7,4.3);
\draw (18,6) node {$
\left(\!
\begin{smallmatrix}
  i_0 & \cdots & i_0{+}3\\
  i_0 & \cdots & i_0{+}3
\end{smallmatrix}
\!\right)$};
\draw[>=latex,->,thick] (18,5.6) -- (18,4.3);
\draw[>=latex,->,thick] (18.3,5.6) -- (20.6,4.3);
\draw (21,6) node {$
\left(\!
\begin{smallmatrix}
  i_0{+}1 & \cdots & i_0{+}4\\
  i_0{+}1 & \cdots & i_0{+}4
\end{smallmatrix}
\!\right)$};
\draw[>=latex,->,thick] (21,5.6) -- (21,4.3);
\draw[>=latex,->,thick] (21.3,5.6) -- (22.6,4.9);
\draw (24,6) node {$\boldsymbol{\cdots}$};
%%%%%%%%%%%%%%%%%%%%%%%%%%%%%%%%%%%%%%%%%
\draw (0,8) node {$\boldsymbol{\cdots}$};
\draw[>=latex,->,thick] (0,7.3) -- (0,6.3);
\draw[>=latex,->,thick] (1.,7.3) -- (2.7,6.3);
\draw (3,8) node {$\boldsymbol{\cdots}$};
\draw[>=latex,->,thick] (3,7.3) -- (3,6.3);
\draw[>=latex,->,thick] (4.,7.3) -- (5.7,6.3);
\draw (6,8) node {$\boldsymbol{\cdots}$};
\draw[>=latex,->,thick] (6,7.3) -- (6,6.3);
\draw[>=latex,->,thick] (7.,7.3) -- (8.7,6.3);
\draw (9,8) node {$\boldsymbol{\cdots}$};
\draw[>=latex,->,thick] (9,7.3) -- (9,6.3);
\draw[>=latex,->,thick] (16.,7.3) -- (17.7,6.3);
\draw (12,8) node {$\boldsymbol{\cdots}$};
\draw[>=latex,->,thick] (15,7.3) -- (15,6.3);
\draw[>=latex,->,thick] (10.,7.3) -- (10.7,6.9);
\draw (15,8) node {$\boldsymbol{\cdots}$};
\draw[>=latex,->,thick] (13.3,7.) -- (14.7,6.3);
\draw (18,8) node {$\boldsymbol{\cdots}$};
\draw[>=latex,->,thick] (18,7.3) -- (18,6.3);
\draw[>=latex,->,thick] (19.,7.3) -- (20.6,6.3);
\draw (21,8) node {$\boldsymbol{\cdots}$};
\draw[>=latex,->,thick] (21,7.3) -- (21,6.3);
\draw[>=latex,->,thick] (22.,7.3) -- (22.7,6.9);
\draw (24,8) node {$\boldsymbol{\cdots}$};
%%%%%%%%%%%%%%%%%%%%%%%%%%%%%%%%%%%%%%%%%
\draw (0,10) node {$
\left(\!
\begin{smallmatrix}
  0 & 1 & \cdots & n{-}2\\
  0 & 1 & \cdots & n{-}2
\end{smallmatrix}
\!\right)$};
\draw[>=latex,->,thick] (0,9.7) -- (0,8.7);
\draw[>=latex,->,thick] (0.3,9.7) -- (2.0,8.7);
\draw (3,10) node {$
\left(\!
\begin{smallmatrix}
  1 & 2 & \cdots & n{-}1\\
  1 & 2 & \cdots & n{-}1
\end{smallmatrix}
\!\right)$};
\draw[>=latex,->,thick] (3,9.7) -- (3,8.7);
\draw[>=latex,->,thick] (3.3,9.7) -- (5.0,8.7);
\draw (6,10) node {$
\left(\!
\begin{smallmatrix}
  2 & 3 & \cdots & n\\
  2 & 3 & \cdots & n
\end{smallmatrix}
\!\right)$};
\draw[>=latex,->,thick] (6,9.7) -- (6,8.7);
\draw[>=latex,->,thick] (6.3,9.7) -- (8.0,8.7);
\draw (9,10) node {$
\left(\!
\begin{smallmatrix}
  3 & 4 & \cdots & n{+}1\\
  3 & 4 & \cdots & n{+}1
\end{smallmatrix}
\!\right)$};
\draw[>=latex,->,thick] (9,9.7) -- (9,8.7);
\draw[>=latex,->,thick] (9.3,9.7) -- (10.6,8.9);

\draw (12,10) node {$\boldsymbol{\cdots}$};
\draw (15,10) node {$
\left(\!
\begin{smallmatrix}
  i_0{-}1 & \cdots & i_0{+}n{-}3\\
  i_0{-}1 & \cdots & i_0{+}n{-}3
\end{smallmatrix}
\!\right)$};
\draw[>=latex,->,thick] (15,9.6) -- (15,8.7);
\draw[>=latex,->,thick] (15.3,9.6) -- (16.9,8.8);

\draw[>=latex,->,thick] (13.3,9.4) -- (14.2,8.8);
\draw (18,10) node {$
\left(\!
\begin{smallmatrix}
  i_0 & \cdots & i_0{+}n{-}2\\
  i_0 & \cdots & i_0{+}n{-}2
\end{smallmatrix}
\!\right)$};
\draw[>=latex,->,thick] (18,9.6) -- (18,8.7);
\draw[>=latex,->,thick] (18.3,9.6) -- (19.6,8.9);
\draw (21,10) node {$
\left(\!
\begin{smallmatrix}
  i_0{+}1 & \cdots & i_0{+}n{-}1\\
  i_0{+}1 & \cdots & i_0{+}n{-}1
\end{smallmatrix}
\!\right)$};
\draw[>=latex,->,thick] (21,9.6) -- (21,8.7);
\draw[>=latex,->,thick] (21.3,9.6) -- (22.6,8.9);
\draw (24,10) node {$\boldsymbol{\cdots}$};
%%%%%%%%%%%%%%%%%%%%%%%%%%%%%%%%%%%%%%%%%
\draw (0,12) node {$
\left(\!
\begin{smallmatrix}
  0 & 1 & \cdots & n{-}1\\
  0 & 1 & \cdots & n{-}1
\end{smallmatrix}
\!\right)$};
\draw[>=latex,->,thick] (0,11.7) -- (0,10.3);
\draw[>=latex,->,thick] (0.3,11.7) -- (2.7,10.3);
\draw (3,12) node {$
\left(\!
\begin{smallmatrix}
  1 & 2 & \cdots & n\\
  1 & 2 & \cdots & n
\end{smallmatrix}
\!\right)$};
\draw[>=latex,->,thick] (3,11.7) -- (3,10.3);
\draw[>=latex,->,thick] (3.3,11.7) -- (5.7,10.3);
\draw (6,12) node {$
\left(\!
\begin{smallmatrix}
  2 & 3 & \cdots & n{+}1\\
  2 & 3 & \cdots & n{+}1
\end{smallmatrix}
\!\right)$};
\draw[>=latex,->,thick] (6,11.7) -- (6,10.3);
\draw[>=latex,->,thick] (6.3,11.7) -- (8.7,10.3);
\draw (9,12) node {$
\left(\!
\begin{smallmatrix}
  3 & 4 & \cdots & n{+}2\\
  3 & 4 & \cdots & n{+}2
\end{smallmatrix}
\!\right)$};
\draw[>=latex,->,thick] (9,11.7) -- (9,10.3);
\draw[>=latex,->,thick] (9.3,11.7) -- (10.6,10.9);
\draw (12,12) node {$\boldsymbol{\cdots}$};
\draw[>=latex,->,thick] (13.3,11.) -- (14.6,10.3);
\draw (15,12) node {$
\left(\!
\begin{smallmatrix}
  i_0{-}1 & \cdots & i_0{+}n{-}2\\
  i_0{-}1 & \cdots & i_0{+}n{-}2
\end{smallmatrix}
\!\right)$};
\draw[>=latex,->,thick] (15,11.6) -- (15,10.3);
\draw[>=latex,->,thick] (15.3,11.6) -- (17.7,10.3);
\draw (18,12) node {$
\left(\!
\begin{smallmatrix}
  i_0 & \cdots & i_0{+}n{-}1\\
  i_0 & \cdots & i_0{+}n{-}1
\end{smallmatrix}
\!\right)$};
\draw[>=latex,->,thick] (18,11.6) -- (18,10.3);
\draw[>=latex,->,thick] (18.3,11.6) -- (20.6,10.3);
\draw (21,12) node {$
\left(\!
\begin{smallmatrix}
  i_0{+}1 & \cdots & i_0{+}n\\
  i_0{+}1 & \cdots & i_0{+}n
\end{smallmatrix}
\!\right)$};
\draw[>=latex,->,thick] (21,11.6) -- (21,10.3);
\draw[>=latex,->,thick] (21.3,11.6) -- (22.6,10.9);
\draw (24,12) node {$\boldsymbol{\cdots}$};
%%%%%%%%%%%%%%%%%%%%%%%%%%%%%%%%%%%%%%%%%
\end{tikzpicture}
}
\end{center}
\vskip.1cm
\caption{
%The natural partial order on the band of the semigroup $\mathscr{I}_{\omega}^{n}$$(\overrightarrow{\operatorname{conv}})$
}\label{fig-2.1}
\end{figure}

For any $i_0\in \omega$ we define the endomorphism $\frak{e}_{i_0}\colon \mathscr{I}_{\omega}^{n}(\overrightarrow{\operatorname{conv}})\to \mathscr{I}_{\omega}^{n}(\overrightarrow{\operatorname{conv}})$ in the following way
\begin{equation*}
(\boldsymbol{0})\frak{e}_{i_0}{=}\boldsymbol{0}, \;
  \left(
\begin{smallmatrix}
  i \\
  j
\end{smallmatrix}
\right)\frak{e}_{i_0}{=}
\left(
\begin{smallmatrix}
  i+i_0 \\
  j+i_0
\end{smallmatrix}
\right)\!, \;
  \left(
\begin{smallmatrix}
  i & i+1\\
  j & j+1
\end{smallmatrix}
\right)\frak{e}_{i_0}{=}
\left(
\begin{smallmatrix}
  i+i_0 & i+1+i_0\\
  j+i_0 & j+1+i_0
\end{smallmatrix}
\right)\!, \; \ldots, \;
  \left(
\begin{smallmatrix}
  i & i+1 & \cdots & i+n-1\\
  j & j+1 & \cdots & j+n-1
\end{smallmatrix}
\right)\frak{e}_{i_0}{=}
\left(
\begin{smallmatrix}
  i+i_0 & i+1+i_0 & \cdots & i+n-1+i_0\\
  j+i_0 & j+1+i_0 & \cdots & j+n-1+i_0
\end{smallmatrix}
\right)\!.
\end{equation*}

\begin{theorem}\label{theorem-2.6}
Let $n$ be any positive integer $\geqslant2$. For every injective endomorphism $\frak{a}\colon \mathscr{I}_{\omega}^{n}(\overrightarrow{\operatorname{conv}})\to \mathscr{I}_{\omega}^{n}(\overrightarrow{\operatorname{conv}})$ there exists $i_0\in \omega$ such that  $\frak{a}=\frak{e}_{i_0}$.
\end{theorem}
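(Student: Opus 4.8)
The plan is to determine $\frak{a}$ first on the rank-$n$ idempotents, then on the minimal idempotent $\left(\begin{smallmatrix}0\\0\end{smallmatrix}\right)$, and finally to reduce the whole statement to Lemma~\ref{lemma-2.5} by composing with the inverse of (the corestriction of) $\frak{e}_{i_0}$. Since $\mathscr{I}_{\omega}^{n}(\overrightarrow{\operatorname{conv}})$ has more than one element and $\frak{a}$ is injective, $\frak{a}$ is non-annihilating, so $(\boldsymbol{0})\frak{a}=\boldsymbol{0}$ by Lemma~\ref{lemma-2.3}. For $a\in\omega$ write $\boldsymbol{u}_a=\left(\begin{smallmatrix} a & a{+}1 & \cdots & a{+}n{-}1\\ a & a{+}1 & \cdots & a{+}n{-}1\end{smallmatrix}\right)$; these are precisely the idempotents of rank $n$, they are exactly the maximal elements of $E(\mathscr{I}_{\omega}^{n}(\overrightarrow{\operatorname{conv}}))$, and every idempotent of $\mathscr{I}_{\omega}^{n}(\overrightarrow{\operatorname{conv}})$ lies $\preccurlyeq$ some $\boldsymbol{u}_a$ (as a convex subset of size $\leqslant n$ sits inside an interval of size $n$).

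First I would show that $(\boldsymbol{u}_a)\frak{a}$ again has rank $n$. For a rank-$r$ idempotent, the set of idempotents lying $\preccurlyeq$ it has exactly $\tfrac{r(r+1)}{2}+1$ elements; since $\frak{a}$ is injective and preserves $\preccurlyeq$ (Proposition~1.14.21(6) of \cite{Lawson-1998}), it carries the $\tfrac{n(n+1)}{2}+1$ idempotents below $\boldsymbol{u}_a$ injectively into those below $(\boldsymbol{u}_a)\frak{a}$, forcing $\operatorname{rank}((\boldsymbol{u}_a)\frak{a})\geqslant n$ and hence $(\boldsymbol{u}_a)\frak{a}=\boldsymbol{u}_{b}$ for some $b\in\omega$. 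A cardinality count then shows that the restriction of $\frak{a}$ to the idempotents below $\boldsymbol{u}_a$ is a bijective semilattice homomorphism onto those below $\boldsymbol{u}_{b}$, hence a semilattice isomorphism, and in particular it preserves the rank of every idempotent lying below a maximal idempotent. Next, calling $\boldsymbol{u}_a$ and $\boldsymbol{u}_{a'}$ \emph{adjacent} when $\operatorname{rank}(\boldsymbol{u}_a\boldsymbol{u}_{a'})=n-1$, a direct computation (here $n\geqslant2$ enters) gives that $\boldsymbol{u}_a$ and $\boldsymbol{u}_{a'}$ are adjacent exactly when $|a-a'|=1$. Since $\boldsymbol{u}_a\boldsymbol{u}_{a'}\preccurlyeq\boldsymbol{u}_a$ and $(\boldsymbol{u}_a\boldsymbol{u}_{a'})\frak{a}=(\boldsymbol{u}_a)\frak{a}\,(\boldsymbol{u}_{a'})\frak{a}$, the rank preservation above shows $\frak{a}$ sends adjacent maximal idempotents to adjacent ones, so the injection $\phi\colon\omega\to\omega$ determined by $(\boldsymbol{u}_a)\frak{a}=\boldsymbol{u}_{\phi(a)}$ satisfies $|\phi(a)-\phi(a+1)|=1$ for all $a$. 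An elementary induction forces $\phi(a)=\phi(0)+a$ (if $\phi(1)<\phi(0)$ then $\phi(a)=\phi(0)-a$ for all $a$, which is impossible). Put $i_0=\phi(0)$, so $(\boldsymbol{u}_a)\frak{a}=\boldsymbol{u}_{a+i_0}$ for all $a\in\omega$.

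Now $\left(\begin{smallmatrix}0\\0\end{smallmatrix}\right)\preccurlyeq\boldsymbol{u}_0$ gives $\left(\begin{smallmatrix}0\\0\end{smallmatrix}\right)\frak{a}\preccurlyeq\boldsymbol{u}_{i_0}$, while $\left(\begin{smallmatrix}0\\0\end{smallmatrix}\right)\cdot\boldsymbol{u}_1=\boldsymbol{0}$ gives $\left(\begin{smallmatrix}0\\0\end{smallmatrix}\right)\frak{a}\cdot\boldsymbol{u}_{i_0+1}=\boldsymbol{0}$; as $\operatorname{dom}\boldsymbol{u}_{i_0}\setminus\operatorname{dom}\boldsymbol{u}_{i_0+1}=\{i_0\}$ and $\left(\begin{smallmatrix}0\\0\end{smallmatrix}\right)\frak{a}\neq\boldsymbol{0}$ by injectivity, this forces $\left(\begin{smallmatrix}0\\0\end{smallmatrix}\right)\frak{a}=\left(\begin{smallmatrix}i_0\\i_0\end{smallmatrix}\right)$. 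To finish, for any non-zero $\boldsymbol{x}=\left(\begin{smallmatrix} s & \cdots & s{+}k{-}1\\ t & \cdots & t{+}k{-}1\end{smallmatrix}\right)$ one has $\boldsymbol{x}\boldsymbol{x}^{-1}\preccurlyeq\boldsymbol{u}_s$ and $\boldsymbol{x}^{-1}\boldsymbol{x}\preccurlyeq\boldsymbol{u}_t$, hence $\boldsymbol{x}=\boldsymbol{u}_s\boldsymbol{x}\boldsymbol{u}_t$, so $(\boldsymbol{x})\frak{a}=\boldsymbol{u}_{s+i_0}\,(\boldsymbol{x})\frak{a}\,\boldsymbol{u}_{t+i_0}$ has domain and range inside $[i_0,\infty)$ and therefore lies in the image of $\frak{e}_{i_0}$. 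Since $\frak{e}_{i_0}$ is an endomorphism and is evidently injective, it is an isomorphism onto its image, and $\frak{b}:=\frak{e}_{i_0}^{-1}\circ\frak{a}$ is a well-defined injective, hence non-annihilating, endomorphism of $\mathscr{I}_{\omega}^{n}(\overrightarrow{\operatorname{conv}})$ with $\left(\begin{smallmatrix}0\\0\end{smallmatrix}\right)\frak{b}=\left(\begin{smallmatrix}0\\0\end{smallmatrix}\right)$. By Lemma~\ref{lemma-2.5}, $\frak{b}$ is the identity automorphism, i.e.\ $\frak{a}=\frak{e}_{i_0}$.

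The main difficulty is concentrated in the two facts about maximal idempotents: that $(\boldsymbol{u}_a)\frak{a}$ again has full rank $n$, and that the induced map $\phi$ is a translation. Because $\frak{a}$ need not be surjective, one cannot simply invert it; the only leverage available is the cardinality of the finite down-sets of idempotents (which rigidifies $(\boldsymbol{u}_a)\frak{a}$ to rank $n$) together with the observation that the adjacency graph on $\{\boldsymbol{u}_a\colon a\in\omega\}$ is a one-ended path, whose injective graph endomorphisms are exactly the translations. Once these are in place, the minimal idempotent is located by a short order computation and Lemma~\ref{lemma-2.5} disposes of the rest.
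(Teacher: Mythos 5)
Your argument is correct, but it takes a genuinely different route from the paper's. The paper determines $\frak{a}$ on the whole idempotent semilattice by an explicit double induction: it sends the maximal chain $L_0$ to a maximal chain ending at some maximal idempotent indexed by $i_0$, rules out a ``leftward'' drift of the images of the successive chains by a descent argument (the indices would have to pass below $0$, contradicting the uniqueness of the idempotent above $\bigl(\begin{smallmatrix}0&\cdots&n-2\\0&\cdots&n-2\end{smallmatrix}\bigr)\frak{a}$), propagates the computation chain by chain, and only then treats non-idempotents via $\boldsymbol{x}\boldsymbol{x}^{-1}$ and $\boldsymbol{x}^{-1}\boldsymbol{x}$. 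You instead concentrate the work on the maximal idempotents $\boldsymbol{u}_a$: the down-set count $\tfrac{r(r+1)}{2}+1$ forces $(\boldsymbol{u}_a)\frak{a}$ to have full rank, the rank of $\boldsymbol{u}_a\boldsymbol{u}_{a'}$ turns the induced map $\phi$ on indices into an injective endomorphism of a one-ended path, hence a translation, and the remainder is delegated to Lemma~\ref{lemma-2.5} after composing with $\frak{e}_{i_0}^{-1}$ --- a lemma the paper proves but deploys only for Corollary~\ref{corollary-2.7}. This reduction yields a noticeably shorter proof and isolates exactly where $n\geqslant 2$ enters (nontrivial adjacency of maximal idempotents), at the cost of two points that deserve to be written out in full: that the restriction of $\frak{a}$ to the down-set of $\boldsymbol{u}_a$ is a \emph{bijective} semilattice homomorphism, hence an order isomorphism preserving ranks (needed for the adjacency step), and that the image of $\frak{a}$ lands inside the image of $\frak{e}_{i_0}$ so that $\frak{b}$ is defined on all of $\mathscr{I}_{\omega}^{n}(\overrightarrow{\operatorname{conv}})$ --- both of which you handle correctly.
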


\begin{proof}
By Lemma~\ref{lemma-2.3} we get that $(\boldsymbol{0})\frak{a}=\boldsymbol{0}$.

It is obvious that
\begin{equation*}
\mathfrak{M}=\left\{
  \left(
\begin{smallmatrix}
  i & i+1 & \cdots & i+n-1\\
  i & i+1 & \cdots & i+n-1
\end{smallmatrix}
\right)\colon i\in\omega\right\}
\end{equation*}
is the set of all maximal idempotents of $E(\mathscr{I}_{\omega}^{n}(\overrightarrow{\operatorname{conv}}))$, and moreover every maximal chain in the semilattice $E(\mathscr{I}_{\omega}^{n}(\overrightarrow{\operatorname{conv}}))$ contains $n+1$ idempotents. Hence
\begin{equation*}
  L_0=\left\{\boldsymbol{0}, \left(
\begin{smallmatrix}
  0 \\
  0
\end{smallmatrix}
\right),
\left(
\begin{smallmatrix}
  0 & 1 \\
  0 & 1
\end{smallmatrix}
\right), \ldots ,
\left(
\begin{smallmatrix}
  0 & 1 & \cdots & n-1\\
  0 & 1 & \cdots & n-1
\end{smallmatrix}
\right)
\right\}
\end{equation*}
and
\begin{equation*}
  L_1=\left\{\boldsymbol{0}, \left(
\begin{smallmatrix}
  1 \\
  1
\end{smallmatrix}
\right),
\left(
\begin{smallmatrix}
  1 & 2 \\
  1 & 2
\end{smallmatrix}
\right), \ldots ,
\left(
\begin{smallmatrix}
  1 & 2 & \cdots & n\\
  1 & 2 & \cdots & n
\end{smallmatrix}
\right)
\right\}
\end{equation*}
are maximal chains in $E(\mathscr{I}_{\omega}^{n}(\overrightarrow{\operatorname{conv}}))$. Since $\frak{a}$ is an injective endomorphism of the semigroup $\mathscr{I}_{\omega}^{n}(\overrightarrow{\operatorname{conv}})$, Proposition~1.14.21(6) of \cite{Lawson-1998} implies that the images $(L_0)\frak{a}$ and $(L_1)\frak{a}$ are maximal chains in $E(\mathscr{I}_{\omega}^{n}(\overrightarrow{\operatorname{conv}}))$.

Put
$
\left(
\begin{smallmatrix}
  0 & 1 & \cdots & n-1\\
  0 & 1 & \cdots & n-1
\end{smallmatrix}
\right)\frak{a}=
\left(
\begin{smallmatrix}
  i_0 & i_0+1 & \cdots & i_0+n-1\\
  i_0 & i_0+1 & \cdots & i_0+n-1
\end{smallmatrix}
\right)\in\mathfrak{M}
$. Since
$\operatorname{rank}
\left(
\begin{smallmatrix}
  1 & \cdots & n-1\\
  1 & \cdots & n-1
\end{smallmatrix}
\right)=n-1
$, $
\left(
\begin{smallmatrix}
  1 & \cdots & n-1\\
  1 & \cdots & n-1
\end{smallmatrix}
\right)\preccurlyeq
\left(
\begin{smallmatrix}
  0 & 1 & \cdots & n-1\\
  0 & 1 & \cdots & n-1
\end{smallmatrix}
\right)$ and $
\left(
\begin{smallmatrix}
  1 & \cdots & n-1\\
  1 & \cdots & n-1
\end{smallmatrix}
\right)\preccurlyeq
\left(
\begin{smallmatrix}
  1 & 2 & \cdots & n\\
  1 & 2 & \cdots & n
\end{smallmatrix}
\right)$, the definition of the natural partial order on the semilattice $E(\mathscr{I}_{\omega}^{n}(\overrightarrow{\operatorname{conv}}))$ and Proposition~1.14.21(6) of \cite{Lawson-1998} imply that either $
\left(
\begin{smallmatrix}
  1 & \cdots & n-1\\
  1 & \cdots & n-1
\end{smallmatrix}
\right)\frak{a}=
\left(
\begin{smallmatrix}
  i_0 & i_0+1 & \cdots & i_0+n-2\\
  i_0 & i_0+1 & \cdots & i_0+n-2
\end{smallmatrix}
\right)
$ or
$
\left(
\begin{smallmatrix}
  1 & \cdots & n-1\\
  1 & \cdots & n-1
\end{smallmatrix}
\right)\frak{a}=
\left(
\begin{smallmatrix}
  i_0+1 & i_0+2 & \cdots & i_0+n-1\\
  i_0+1 & i_0+2 & \cdots & i_0+n-1
\end{smallmatrix}
\right)
$.

Suppose that $
\left(
\begin{smallmatrix}
  1 & \cdots & n-1\\
  1 & \cdots & n-1
\end{smallmatrix}
\right)\frak{a}=
\left(
\begin{smallmatrix}
  i_0 & i_0+1 & \cdots & i_0+n-2\\
  i_0 & i_0+1 & \cdots & i_0+n-2
\end{smallmatrix}
\right)
$. Since $
\left(
\begin{smallmatrix}
  1 & 2 & \cdots & n\\
  1 & 2 & \cdots & n
\end{smallmatrix}
\right)\in\mathfrak{M}
$ we have that $
\left(
\begin{smallmatrix}
  1 & 2 & \cdots & n\\
  1 & 2 & \cdots & n
\end{smallmatrix}
\right)\frak{a}\in\mathfrak{M}
$, and the definition of the natural partial order on the semilattice $E(\mathscr{I}_{\omega}^{n}(\overrightarrow{\operatorname{conv}}))$ and Proposition~1.14.21(6) of \cite{Lawson-1998} imply that $
\left(
\begin{smallmatrix}
  1 & 2 & \cdots & n\\
  1 & 2 & \cdots & n
\end{smallmatrix}
\right)\frak{a}=
\left(
\begin{smallmatrix}
  i_0-1 & i_0 & \cdots & i_0+n-2\\
  i_0-1 & i_0 & \cdots & i_0+n-2
\end{smallmatrix}
\right)
$. Again, by the definition of the natural partial order on the semilattice $E(\mathscr{I}_{\omega}^{n}(\overrightarrow{\operatorname{conv}}))$ and Proposition~1.14.21(6) of \cite{Lawson-1998} we obtain that $
\left(
\begin{smallmatrix}
  2 & \cdots & n\\
  2 & \cdots & n
\end{smallmatrix}
\right)\frak{a}=
\left(
\begin{smallmatrix}
  i_0-1 & i_0 & \cdots & i_0+n-3\\
  i_0-1 & i_0 & \cdots & i_0+n-3
\end{smallmatrix}
\right)
$ because $\operatorname{rank}
\left(
\begin{smallmatrix}
  2 & \cdots & n\\
  2 & \cdots & n
\end{smallmatrix}
\right)=n-1
$ and $
\left(
\begin{smallmatrix}
  2 & \cdots & n\\
  2 & \cdots & n
\end{smallmatrix}
\right)\preccurlyeq
\left(
\begin{smallmatrix}
  1 & 2 & \cdots & n\\
  1 & 2 & \cdots & n
\end{smallmatrix}
\right)$. Since  $
\left(
\begin{smallmatrix}
  2 & \cdots & n\\
  2 & \cdots & n
\end{smallmatrix}
\right)\preccurlyeq
\left(
\begin{smallmatrix}
  2 & 3 & \cdots & n+1\\
  2 & 3 & \cdots & n+1
\end{smallmatrix}
\right)$ the above arguments imply that $
\left(
\begin{smallmatrix}
  2 & 3 & \cdots & n+1\\
  2 & 3 & \cdots & n+1
\end{smallmatrix}
\right)\frak{a}=
\left(
\begin{smallmatrix}
  i_0-2 & i_0-1 & \cdots & i_0+n-3\\
  i_0-2 & i_0-1 & \cdots & i_0+n-3
\end{smallmatrix}
\right)
$ and $
\left(
\begin{smallmatrix}
  3 & \cdots & n+1\\
  3 & \cdots & n+1
\end{smallmatrix}
\right)\frak{a}=
\left(
\begin{smallmatrix}
  i_0-2 & i_0-1 & \cdots & i_0+n-4\\
  i_0-2 & i_0-1 & \cdots & i_0+n-4
\end{smallmatrix}
\right)
$. Next, we extend the above procedure step-by-step using the definition of the natural partial order on the semilattice $E(\mathscr{I}_{\omega}^{n}(\overrightarrow{\operatorname{conv}}))$ and Proposition~1.14.21(6) of \cite{Lawson-1998} we get that $
\left(
\begin{smallmatrix}
  i_0+1 & \cdots & i_0+n-1\\
  i_0+1 & \cdots & i_0+n-1
\end{smallmatrix}
\right)\frak{a}=
\left(
\begin{smallmatrix}
  0 & 1 & \cdots & n-2\\
  0 & 1 & \cdots & n-2
\end{smallmatrix}
\right)
$ and $
\left(
\begin{smallmatrix}
  i_0 & i_0+1 & \cdots & i_0+n-1\\
  i_0 & i_0+1 & \cdots & i_0+n-1
\end{smallmatrix}
\right)\frak{a}=
\left(
\begin{smallmatrix}
  0 & 1 & \cdots & n-1\\
  0 & 1 & \cdots & n-1
\end{smallmatrix}
\right)
$. Since $\frak{a}$ is an injective endomorphism of the semigroup $\mathscr{I}_{\omega}^{n}(\overrightarrow{\operatorname{conv}})$, $
\left(
\begin{smallmatrix}
  i_0+1 & \cdots & i_0+n-1\\
  i_0+1 & \cdots & i_0+n-1
\end{smallmatrix}
\right)\preccurlyeq
\left(
\begin{smallmatrix}
  i_0 & i_0+1 & \cdots & i_0+n-1\\
  i_0 & i_0+1 & \cdots & i_0+n-1
\end{smallmatrix}
\right)$ and $
\left(
\begin{smallmatrix}
  i_0+1 & \cdots & i_0+n-1\\
  i_0+1 & \cdots & i_0+n-1
\end{smallmatrix}
\right)\preccurlyeq
\left(
\begin{smallmatrix}
  i_0+1 & i_0+2 & \cdots & i_0+n\\
  i_0+1 & i_0+2 & \cdots & i_0+n
\end{smallmatrix}
\right)$ in $E(\mathscr{I}_{\omega}^{n}(\overrightarrow{\operatorname{conv}}))$, we conclude that Proposition~1.14.21(6) of \cite{Lawson-1998} implies that $
\left(
\begin{smallmatrix}
  i_0+1 & \cdots & i_0+n-1\\
  i_0+1 & \cdots & i_0+n-1
\end{smallmatrix}
\right)\frak{a}\preccurlyeq
\left(
\begin{smallmatrix}
  i_0 & i_0+1 & \cdots & i_0+n-1\\
  i_0 & i_0+1 & \cdots & i_0+n-1
\end{smallmatrix}
\right)\frak{a}$ and \linebreak $
\left(
\begin{smallmatrix}
  i_0+1 & \cdots & i_0+n-1\\
  i_0+1 & \cdots & i_0+n-1
\end{smallmatrix}
\right)\frak{a}\preccurlyeq
\left(
\begin{smallmatrix}
  i_0+1 & i_0+2 & \cdots & i_0+n\\
  i_0+1 & i_0+2 & \cdots & i_0+n
\end{smallmatrix}
\right)\frak{a}$. But  $
\left(
\begin{smallmatrix}
  i_0 & i_0+1 & \cdots & i_0+n-1\\
  i_0 & i_0+1 & \cdots & i_0+n-1
\end{smallmatrix}
\right)\frak{a}=
\left(
\begin{smallmatrix}
  0 & 1 & \cdots & n-1\\
  0 & 1 & \cdots & n-1
\end{smallmatrix}
\right)
$ is the unique idempotent of the semilattice $E(\mathscr{I}_{\omega}^{n}(\overrightarrow{\operatorname{conv}}))$ which is greater than $
\left(
\begin{smallmatrix}
  i_0+1 & \cdots & i_0+n-1\\
  i_0+1 & \cdots & i_0+n-1
\end{smallmatrix}
\right)\frak{a}=
\left(
\begin{smallmatrix}
  0 & 1 & \cdots & n-2\\
  0 & 1 & \cdots & n-2
\end{smallmatrix}
\right)
$. The obtained contradiction implies that $
\left(
\begin{smallmatrix}
  1 & \cdots & n-1\\
  1 & \cdots & n-1
\end{smallmatrix}
\right)\frak{a}\neq
\left(
\begin{smallmatrix}
  i_0 & i_0+1 & \cdots & i_0+n-2\\
  i_0 & i_0+1 & \cdots & i_0+n-2
\end{smallmatrix}
\right)
$ and hence we get that  $
\left(
\begin{smallmatrix}
  1 & \cdots & n-1\\
  1 & \cdots & n-1
\end{smallmatrix}
\right)\frak{a}=
\left(
\begin{smallmatrix}
  i_0+1 & i_0+2 & \cdots & i_0+n-1\\
  i_0+1 & i_0+2 & \cdots & i_0+n-1
\end{smallmatrix}
\right)
$.

The inequality $
\left(
\begin{smallmatrix}
  0 & 1 & \cdots & n-2\\
  0 & 1 & \cdots & n-2
\end{smallmatrix}
\right)\preccurlyeq
\left(
\begin{smallmatrix}
  0 & 1 & \cdots & n-1\\
  0 & 1 & \cdots & n-1
\end{smallmatrix}
\right)
$ implies that $
\left(
\begin{smallmatrix}
  0 & 1 & \cdots & n-2\\
  0 & 1 & \cdots & n-2
\end{smallmatrix}
\right)\frak{a}\preccurlyeq
\left(
\begin{smallmatrix}
  0 & 1 & \cdots & n-1\\
  0 & 1 & \cdots & n-1
\end{smallmatrix}
\right)\frak{a}
$, and hence the definition of the natural partial order on  $E(\mathscr{I}_{\omega}^{n}(\overrightarrow{\operatorname{conv}}))$, injectivity of $\frak{a}$, Proposition~1.14.21(6) of \cite{Lawson-1998} and the equality $
\left(
\begin{smallmatrix}
  1 & \cdots & n-1\\
  1 & \cdots & n-1
\end{smallmatrix}
\right)\frak{a}=
\left(
\begin{smallmatrix}
  i_0+1 & i_0+2 & \cdots & i_0+n-1\\
  i_0+1 & i_0+2 & \cdots & i_0+n-1
\end{smallmatrix}
\right)
$ imply that $
\left(
\begin{smallmatrix}
  0 & 1 & \cdots & n-2\\
  0 & 1 & \cdots & n-2
\end{smallmatrix}
\right)\frak{a}=
\left(
\begin{smallmatrix}
  i_0 & i_0+1 & \cdots & i_0+n-2\\
  i_0 & i_0+1 & \cdots & i_0+n-2
\end{smallmatrix}
\right)
$. Again, since $
\left(
\begin{smallmatrix}
  1 & 2 & \cdots & n-2\\
  1 & 2 & \cdots & n-2
\end{smallmatrix}
\right)\preccurlyeq
\left(
\begin{smallmatrix}
  1 & 2 & \cdots & n-1\\
  1 & 2 & \cdots & n-1
\end{smallmatrix}
\right)
$ and $
\left(
\begin{smallmatrix}
  1 & 2 & \cdots & n-2\\
  1 & 2 & \cdots & n-2
\end{smallmatrix}
\right)\preccurlyeq
\left(
\begin{smallmatrix}
  0 & 1 & \cdots & n-2\\
  0 & 1 & \cdots & n-2
\end{smallmatrix}
\right)
$ we obtain that $
\left(
\begin{smallmatrix}
  1 & 2 & \cdots & n-2\\
  1 & 2 & \cdots & n-2
\end{smallmatrix}
\right)\frak{a}\preccurlyeq
\left(
\begin{smallmatrix}
  1 & 2 & \cdots & n-1\\
  1 & 2 & \cdots & n-1
\end{smallmatrix}
\right)\frak{a}
$ and $
\left(
\begin{smallmatrix}
  1 & 2 & \cdots & n-2\\
  1 & 2 & \cdots & n-2
\end{smallmatrix}
\right)\frak{a}\preccurlyeq
\left(
\begin{smallmatrix}
  0 & 1 & \cdots & n-2\\
  0 & 1 & \cdots & n-2
\end{smallmatrix}
\right)\frak{a}
$. The above two inequalities  and the equalities \linebreak $
\left(
\begin{smallmatrix}
  1 & \cdots & n-1\\
  1 & \cdots & n-1
\end{smallmatrix}
\right)\frak{a}=
\left(
\begin{smallmatrix}
  i_0+1 & i_0+2 & \cdots & i_0+n-1\\
  i_0+1 & i_0+2 & \cdots & i_0+n-1
\end{smallmatrix}
\right)
$ and $
\left(
\begin{smallmatrix}
  0 & 1 & \cdots & n-2\\
  0 & 1 & \cdots & n-2
\end{smallmatrix}
\right)\frak{a}=
\left(
\begin{smallmatrix}
  i_0 & i_0+1 & \cdots & i_0+n-2\\
  i_0 & i_0+1 & \cdots & i_0+n-2
\end{smallmatrix}
\right)
$ imply that $
\left(
\begin{smallmatrix}
  1 & 2 & \cdots & n-2\\
  1 & 2 & \cdots & n-2
\end{smallmatrix}
\right)\frak{a}=
\left(
\begin{smallmatrix}
  i_0+1 & i_0+2 & \cdots & i_0+n-2\\
  i_0+1 & i_0+2 & \cdots & i_0+n-2
\end{smallmatrix}
\right)
$. Now, if we repeat the above procedure step-by-step we get the following equalities
\begin{align*}
  \left(
\begin{smallmatrix}
  0 & 1 & \cdots & n-1\\
  0 & 1 & \cdots & n-1
\end{smallmatrix}
\right)\frak{a}&=
\left(
\begin{smallmatrix}
  i_0 & i_0+1 & \cdots & i_0+n-1\\
  i_0 & i_0+1 & \cdots & i_0+n-1
\end{smallmatrix}
\right),  &
  \left(
\begin{smallmatrix}
  1 & 2 & \cdots & n\\
  1 & 2 & \cdots & n
\end{smallmatrix}
\right)\frak{a}&=
\left(
\begin{smallmatrix}
  i_0+1 & i_0+2 & \cdots & i_0+n\\
  i_0+1 & i_0+2 & \cdots & i_0+n
\end{smallmatrix}
\right),\\
  \left(
\begin{smallmatrix}
  0 & 1 & \cdots & n-2\\
  0 & 1 & \cdots & n-2
\end{smallmatrix}
\right)\frak{a}&=
\left(
\begin{smallmatrix}
  i_0 & i_0+1 & \cdots & i_0+n-2\\
  i_0 & i_0+1 & \cdots & i_0+n-2
\end{smallmatrix}
\right),  &
  \left(
\begin{smallmatrix}
  1 & 2 & \cdots & n-1\\
  1 & 2 & \cdots & n-1
\end{smallmatrix}
\right)\frak{a}&=
\left(
\begin{smallmatrix}
  i_0+1 & i_0+2 & \cdots & i_0+n-1\\
  i_0+1 & i_0+2 & \cdots & i_0+n-1
\end{smallmatrix}
\right),\\
  \cdots \qquad & \qquad  \qquad \cdots \qquad \qquad & \qquad  \qquad \cdots \qquad  & \qquad  \qquad \cdots \qquad \qquad \\
  \left(
\begin{smallmatrix}
  0 & 1 \\
  0 & 1
\end{smallmatrix}
\right)\frak{a}&=
\left(
\begin{smallmatrix}
  i_0 & i_0+1\\
  i_0 & i_0+1
\end{smallmatrix}
\right),  &
  \left(
\begin{smallmatrix}
  1 & 2\\
  1 & 2
\end{smallmatrix}
\right)\frak{a}&=
\left(
\begin{smallmatrix}
  i_0+1 & i_0+2\\
  i_0+1 & i_0+2
\end{smallmatrix}
\right),\\
  \left(
\begin{smallmatrix}
  0 \\
  0
\end{smallmatrix}
\right)\frak{a}&=
\left(
\begin{smallmatrix}
  i_0\\
  i_0
\end{smallmatrix}
\right),  &
  \left(
\begin{smallmatrix}
  1\\
  1
\end{smallmatrix}
\right)\frak{a}&=
\left(
\begin{smallmatrix}
  i_0+1\\
  i_0+1
\end{smallmatrix}
\right).
\end{align*}
Thus, we show that the  initial case of induction holds.

Next we shall prove that the induction step holds: if for positive integer the equalities
\begin{align*}
  \left(
\begin{smallmatrix}
  p & p+1 & \cdots & p+n-1\\
  p & p+1 & \cdots & p+n-1
\end{smallmatrix}
\right)\frak{a}&=
\left(
\begin{smallmatrix}
  p+i_0 & p+i_0+1 & \cdots & p+i_0+n-1\\
  p+i_0 & p+i_0+1 & \cdots & p+i_0+n-1
\end{smallmatrix}
\right),  \\
    \left(
\begin{smallmatrix}
  p & p+1 & \cdots & p+n-2\\
  p & p+1 & \cdots & p+n-2
\end{smallmatrix}
\right)\frak{a}&=
\left(
\begin{smallmatrix}
  p+i_0 & p+i_0+1 & \cdots & p+i_0+n-2\\
  p+i_0 & p+i_0+1 & \cdots & p+i_0+n-2
\end{smallmatrix}
\right),  \\
  \cdots \qquad & \qquad  \qquad \cdots \qquad \qquad  \\
  \left(
\begin{smallmatrix}
  p & p+1 \\
  p & p+1
\end{smallmatrix}
\right)\frak{a}&=
\left(
\begin{smallmatrix}
  p+i_0 & p+i_0+1\\
  p+i_0 & p+i_0+1
\end{smallmatrix}
\right),  \\
  \left(
\begin{smallmatrix}
  p \\
  p
\end{smallmatrix}
\right)\frak{a}&=
\left(
\begin{smallmatrix}
  p+i_0\\
  p+i_0
\end{smallmatrix}
\right)
\end{align*}
hold for $p=0,1,\ldots,k$, then they hold for $p=k+1$.

By the assumption of induction we have that $
\left(
\begin{smallmatrix}
  k & k+1 & \cdots & k+n-1\\
  k & k+1 & \cdots & k+n-1
\end{smallmatrix}
\right)\frak{a}=
\left(
\begin{smallmatrix}
  k+i_0 & k+i_0+1 & \cdots & k+i_0+n-1\\
  k+i_0 & k+i_0+1 & \cdots & k+i_0+n-1
\end{smallmatrix}
\right)
$ and \linebreak $
\left(
\begin{smallmatrix}
  k & k+1 & \cdots & k+n-2\\
  k & k+1 & \cdots & k+n-2
\end{smallmatrix}
\right)\frak{a}=
\left(
\begin{smallmatrix}
  k+i_0 & k+i_0+1 & \cdots & k+i_0+n-2\\
  k+i_0 & k+i_0+1 & \cdots & k+i_0+n-2
\end{smallmatrix}
\right)
$. Since the endomorphism $\frak{a}$ is injective, this, the inequalities $
\left(
\begin{smallmatrix}
  k & k+1 & \cdots & k+n-2\\
  k & k+1 & \cdots & k+n-2
\end{smallmatrix}
\right)\preccurlyeq
\left(
\begin{smallmatrix}
  k & k+1 & \cdots & k+n-1\\
  k & k+1 & \cdots & k+n-1
\end{smallmatrix}
\right)
$ and $
\left(
\begin{smallmatrix}
  k+1 & k+2 & \cdots & k+n-1\\
  k+1 & k+2 & \cdots & k+n-1
\end{smallmatrix}
\right)\preccurlyeq
\left(
\begin{smallmatrix}
  k & k+1 & \cdots & k+n-1\\
  k & k+1 & \cdots & k+n-1
\end{smallmatrix}
\right)
$, the definition of the natural partial order on the semilattice $E(\mathscr{I}_{\omega}^{n}(\overrightarrow{\operatorname{conv}}))$ and Proposition~1.14.21(6) of \cite{Lawson-1998} imply that $
\left(
\begin{smallmatrix}
  k+1 & k+2 & \cdots & k+n-1\\
  k+1 & k+2 & \cdots & k+n-1
\end{smallmatrix}
\right)\frak{a}=
\left(
\begin{smallmatrix}
  k+i_0+1 & k+i_0+2 & \cdots & k+i_0+n-1\\
  k+i_0+1 & k+i_0+2 & \cdots & k+i_0+n-1
\end{smallmatrix}
\right)
$. Again, since $
\left(
\begin{smallmatrix}
  k+1 & k+2 & \cdots & k+n\\
  k+1 & k+2 & \cdots & k+n
\end{smallmatrix}
\right)
$ is the unique idempotent of $E(\mathscr{I}_{\omega}^{n}(\overrightarrow{\operatorname{conv}}))$ which is greater than $
\left(
\begin{smallmatrix}
  k+1 & k+2 & \cdots & k+n-1\\
  k+1 & k+2 & \cdots & k+n-1
\end{smallmatrix}
\right)
$ and it is  distinct from the idempotent $
\left(
\begin{smallmatrix}
  k & k+1 & \cdots & k+n-1\\
  k & k+1 & \cdots & k+n-1
\end{smallmatrix}
\right)
$, the definition of the natural partial order on the semilattice $E(\mathscr{I}_{\omega}^{n}(\overrightarrow{\operatorname{conv}}))$ and Proposition~1.14.21(6) of \cite{Lawson-1998} imply that $
\left(
\begin{smallmatrix}
  k+1 & k+2 & \cdots & k+n\\
  k+1 & k+2 & \cdots & k+n
\end{smallmatrix}
\right)\frak{a}=
\left(
\begin{smallmatrix}
  k+i_0+1 & k+i_0+2 & \cdots & k+i_0+n\\
  k+i_0+1 & k+i_0+2 & \cdots & k+i_0+n
\end{smallmatrix}
\right)
$. Next, the equality $
\left(
\begin{smallmatrix}
  k+1 & k+2 & \cdots & k+n-1\\
  k+1 & k+2 & \cdots & k+n-1
\end{smallmatrix}
\right)\frak{a}=
\left(
\begin{smallmatrix}
  k+i_0+1 & k+i_0+2 & \cdots & k+i_0+n-1\\
  k+i_0+1 & k+i_0+2 & \cdots & k+i_0+n-1
\end{smallmatrix}
\right)
$ and the above presented argument imply that $
\left(
\begin{smallmatrix}
  k+1 & k+2 & \cdots & k+n-2\\
  k+1 & k+2 & \cdots & k+n-2
\end{smallmatrix}
\right)\frak{a}=
\left(
\begin{smallmatrix}
  k+i_0+1 & k+i_0+2 & \cdots & k+i_0+n-2\\
  k+i_0+1 & k+i_0+2 & \cdots & k+i_0+n-2
\end{smallmatrix}
\right)
$, and by the similar way step-by-step we obtain that the following equalities
\begin{align*}
  \left(
\begin{smallmatrix}
  k+1 & k+2 & \cdots & k+n\\
  k+1 & k+2 & \cdots & k+n
\end{smallmatrix}
\right)\frak{a}&=
\left(
\begin{smallmatrix}
  k+i_0+1 & k+i_0+2 & \cdots & k+i_0+n\\
  k+i_0+1 & k+i_0+2 & \cdots & k+i_0+n
\end{smallmatrix}
\right), \\
  \left(
\begin{smallmatrix}
  k+1 & k+2 & \cdots & k+n-1\\
  k+1 & k+2 & \cdots & k+n-1
\end{smallmatrix}
\right)\frak{a}&=
\left(
\begin{smallmatrix}
  k+i_0+1 & k+i_0+2 & \cdots & k+i_0+n-1\\
  k+i_0+1 & k+i_0+2 & \cdots & k+i_0+n-1
\end{smallmatrix}
\right), \\
\cdots \qquad & \qquad  \qquad \cdots \qquad \qquad  \\
  \left(
\begin{smallmatrix}
  k+1 & k+2 \\
  k+1 & k+2
\end{smallmatrix}
\right)\frak{a}&=
\left(
\begin{smallmatrix}
  k+i_0+1 & k+i_0+2 \\
  k+i_0+1 & k+i_0+2
\end{smallmatrix}
\right), \\
 \left(
\begin{smallmatrix}
  k+1 \\
  k+1
\end{smallmatrix}
\right)\frak{a}&=
\left(
\begin{smallmatrix}
  k+i_0+1 \\
  k+i_0+1
\end{smallmatrix}
\right)
\end{align*}
hold, and hence we proved the step of induction.

Fix an arbitrary non-idempotent element $\boldsymbol{x}=
\left(
\begin{smallmatrix}
  a & a+1 & \cdots & a+m \\
  b & b+1 & \cdots & b+m
\end{smallmatrix}
\right)
$ of the semigroup $\mathscr{I}_{\omega}^{n}(\overrightarrow{\operatorname{conv}})$, for some $a,b\in \omega$ and $m=0,1,\ldots,n-1$. Then $\boldsymbol{x}\boldsymbol{x}^{-1}=
\left(
\begin{smallmatrix}
  a & a+1 & \cdots & a+m \\
  a & a+1 & \cdots & a+m
\end{smallmatrix}
\right)
$ and $\boldsymbol{x}^{-1}\boldsymbol{x}=
\left(
\begin{smallmatrix}
  b & b+1 & \cdots & b+m \\
  b & b+1 & \cdots & b+m
\end{smallmatrix}
\right)
$, and hence by the previous part of the proof we have that
\begin{equation*}
  (\boldsymbol{x}\boldsymbol{x}^{-1})\mathfrak{a}=
  \left(
\begin{smallmatrix}
  i_0+a & i_0+a+1 & \cdots & i_0+a+m \\
  i_0+a & i_0+a+1 & \cdots & i_0+a+m
\end{smallmatrix}
\right)
\qquad \hbox{and} \qquad
  (\boldsymbol{x}^{-1}\boldsymbol{x})\mathfrak{a}=
  \left(
\begin{smallmatrix}
  i_0+b & i_0+b+1 & \cdots & i_0+b+m \\
  i_0+b & i_0+b+1 & \cdots & i_0+b+m
\end{smallmatrix}
\right).
\end{equation*}
Since $\mathscr{I}_{\omega}^{n}(\overrightarrow{\operatorname{conv}})$ is an inverse subsemigroup of the symmetric inverse monoid $\mathscr{I}_{\omega}$ over $\omega$, we conclude that
\begin{equation*}
\operatorname{dom}((\boldsymbol{x})\mathfrak{a})=\operatorname{dom}((\boldsymbol{x}\boldsymbol{x}^{-1})\mathfrak{a})=\left\{i_0+a, i_0+a+1, \ldots, i_0+a+m\right\}
\end{equation*}
and
\begin{equation*}
\operatorname{ran}((\boldsymbol{x})\mathfrak{a})=\operatorname{ran}((\boldsymbol{x}^{-1}\boldsymbol{x})\mathfrak{a})= \left\{i_0+b, i_0+b+1, \ldots, i_0+b+m\right\}.
\end{equation*}
Now, the definition of the semigroup $\mathscr{I}_{\omega}^{n}(\overrightarrow{\operatorname{conv}})$ implies that
\begin{equation*}
  (\boldsymbol{x})\mathfrak{a}=
  \left(
\begin{smallmatrix}
  i_0+a & i_0+a+1 & \cdots & i_0+a+m \\
  i_0+b & i_0+b+1 & \cdots & i_0+b+m
\end{smallmatrix}
\right).
\end{equation*}
By Corollary~\ref{corollary-2.2}, $\mathfrak{a}=\mathfrak{e}_{i_0}$ is an endomorphism of the semigroup $\mathscr{I}_{\omega}^{n}(\overrightarrow{\operatorname{conv}})$,
which completes the proof of the theorem.
\end{proof}

Lemma~\ref{lemma-2.5} and Theorem~\ref{theorem-2.6} imply

\begin{corollary}\label{corollary-2.7}
For any positive integer $n\geqslant2$ every automorphism of the semigroup $\mathscr{I}_{\omega}^{n}(\overrightarrow{\operatorname{conv}})$ is the identity map of $\mathscr{I}_{\omega}^{n}(\overrightarrow{\operatorname{conv}})$.
\end{corollary}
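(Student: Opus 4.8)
The plan is to read the statement off Theorem~\ref{theorem-2.6}. Any automorphism $\frak{a}$ of $\mathscr{I}_{\omega}^{n}(\overrightarrow{\operatorname{conv}})$ is, in particular, an injective endomorphism, so Theorem~\ref{theorem-2.6} applies and produces an $i_0\in\omega$ with $\frak{a}=\frak{e}_{i_0}$. It then remains only to force $i_0=0$, since $\frak{e}_0$ is, by inspection of its defining formulae, the identity map of $\mathscr{I}_{\omega}^{n}(\overrightarrow{\operatorname{conv}})$.

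To see that $i_0=0$ I would use the surjectivity of $\frak{a}$. Every non-zero element of $\mathscr{I}_{\omega}^{n}(\overrightarrow{\operatorname{conv}})$ has the form $\left(\begin{smallmatrix} a & a+1 & \cdots & a+m\\ b & b+1 & \cdots & b+m\end{smallmatrix}\right)$, and the defining formula for $\frak{e}_{i_0}$ sends it to $\left(\begin{smallmatrix} a+i_0 & \cdots & a+m+i_0\\ b+i_0 & \cdots & b+m+i_0\end{smallmatrix}\right)$, whose domain and range consist of integers $\geqslant i_0$; also $(\boldsymbol{0})\frak{e}_{i_0}=\boldsymbol{0}$. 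Consequently, if $i_0\geqslant 1$, then $\left(\begin{smallmatrix} 0\\ 0\end{smallmatrix}\right)$ --- which has domain $\{0\}$ --- lies outside the image of $\frak{e}_{i_0}$, contradicting the surjectivity of $\frak{a}=\frak{e}_{i_0}$. Hence $i_0=0$ and $\frak{a}=\frak{e}_0=\operatorname{id}$. Alternatively, once one has noted that $\left(\begin{smallmatrix} 0\\ 0\end{smallmatrix}\right)\frak{a}=\left(\begin{smallmatrix} i_0\\ i_0\end{smallmatrix}\right)$ and thereby that $\left(\begin{smallmatrix} 0\\ 0\end{smallmatrix}\right)\frak{a}=\left(\begin{smallmatrix} 0\\ 0\end{smallmatrix}\right)$, Lemma~\ref{lemma-2.5} may be invoked directly to conclude that $\frak{a}$ is the identity automorphism.

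No step here presents a genuine difficulty: the entire weight of the argument sits in Theorem~\ref{theorem-2.6}, and the one remaining observation --- that a nonzero translation $\frak{e}_{i_0}$ cannot be onto because $0$ never appears in the domain of any value it takes --- is immediate from the explicit action of $\frak{e}_{i_0}$.
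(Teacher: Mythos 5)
Your proof is correct and follows exactly the route the paper intends: the paper derives this corollary directly from Theorem~\ref{theorem-2.6} (every injective endomorphism is some $\frak{e}_{i_0}$) together with Lemma~\ref{lemma-2.5}, and your surjectivity argument forcing $i_0=0$ is precisely the missing detail. Nothing further is needed.
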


For any positive integer $n$ and any injective endomorphisms $\mathfrak{e}_{i_1}$ and $\mathfrak{e}_{i_2}$ of the semigroup $\mathscr{I}_{\omega}^{n}(\overrightarrow{\operatorname{conv}})$ simple calculations show that
\begin{equation*}
  \mathfrak{e}_{i_1}\circ\mathfrak{e}_{i_2}=\mathfrak{e}_{i_1+i_2}=\mathfrak{e}_{i_2}\circ\mathfrak{e}_{i_1}.
\end{equation*}

This and Theorem~\ref{theorem-2.6} imply

\begin{theorem}\label{theorem-2.8}
For any positive integer $n\geqslant2$ the semigroup of injective endomorphisms of the semigroup $\mathscr{I}_{\omega}^{n}(\overrightarrow{\operatorname{conv}})$ is  isomorphic to the  semigroup $(\omega,+)$. In particular the group of automorphisms of  $\mathscr{I}_{\omega}^{n}(\overrightarrow{\operatorname{conv}})$ is trivial.
\end{theorem}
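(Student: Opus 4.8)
The plan is to deduce the statement directly from Theorem~\ref{theorem-2.6} and the composition identity $\mathfrak{e}_{i_1}\circ\mathfrak{e}_{i_2}=\mathfrak{e}_{i_1+i_2}=\mathfrak{e}_{i_2}\circ\mathfrak{e}_{i_1}$ recorded just above. First I would pin down the underlying set: let $\mathscr{E}$ denote the set of all injective endomorphisms of $\mathscr{I}_{\omega}^{n}(\overrightarrow{\operatorname{conv}})$. By Corollary~\ref{corollary-2.2} each $\mathfrak{e}_{i_0}$ is an endomorphism, and it is injective, since on every nonzero element it acts by the shift $\left(\begin{smallmatrix} i & \cdots & i+k \\ j & \cdots & j+k\end{smallmatrix}\right)\mapsto\left(\begin{smallmatrix} i+i_0 & \cdots & i+i_0+k \\ j+i_0 & \cdots & j+i_0+k\end{smallmatrix}\right)$, from which $i$, $j$ and $k$ are recovered, while $\boldsymbol{0}$ is the only preimage of $\boldsymbol{0}$. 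Hence $\{\mathfrak{e}_{i_0}\colon i_0\in\omega\}\subseteq\mathscr{E}$; the reverse inclusion is exactly Theorem~\ref{theorem-2.6}, so $\mathscr{E}=\{\mathfrak{e}_{i_0}\colon i_0\in\omega\}$.

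Next I would observe that the identity $\mathfrak{e}_{i_1}\circ\mathfrak{e}_{i_2}=\mathfrak{e}_{i_1+i_2}$ shows at once that $\mathscr{E}$ is closed under composition, hence is a subsemigroup of the semigroup of all endomorphisms of $\mathscr{I}_{\omega}^{n}(\overrightarrow{\operatorname{conv}})$ under composition. I would then define $\Phi\colon(\omega,+)\to\mathscr{E}$ by $\Phi(i_0)=\mathfrak{e}_{i_0}$. The same identity says precisely that $\Phi$ is a homomorphism, and $\Phi$ is surjective by the description of $\mathscr{E}$ obtained above. For injectivity of $\Phi$ I would evaluate at the idempotent $\left(\begin{smallmatrix}0\\0\end{smallmatrix}\right)$: if $\mathfrak{e}_{i_1}=\mathfrak{e}_{i_2}$ then $\left(\begin{smallmatrix}i_1\\i_1\end{smallmatrix}\right)=\left(\begin{smallmatrix}0\\0\end{smallmatrix}\right)\mathfrak{e}_{i_1}=\left(\begin{smallmatrix}0\\0\end{smallmatrix}\right)\mathfrak{e}_{i_2}=\left(\begin{smallmatrix}i_2\\i_2\end{smallmatrix}\right)$, so $i_1=i_2$. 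Thus $\Phi$ is an isomorphism and $\mathscr{E}\cong(\omega,+)$.

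Finally, for the ``in particular'' assertion I would note that the automorphisms of $\mathscr{I}_{\omega}^{n}(\overrightarrow{\operatorname{conv}})$ are precisely the invertible elements of $\mathscr{E}$, which under $\Phi$ correspond to the invertible elements of $(\omega,+)$, i.e.\ to $i_0=0$; hence the only automorphism is $\mathfrak{e}_0$, the identity map of $\mathscr{I}_{\omega}^{n}(\overrightarrow{\operatorname{conv}})$, which also recovers Corollary~\ref{corollary-2.7}. I do not expect any genuine obstacle here: all the substance lies in Theorem~\ref{theorem-2.6}, and what remains are the routine checks that each $\mathfrak{e}_{i_0}$ is injective, that $i_0\mapsto\mathfrak{e}_{i_0}$ is a bijective homomorphism onto $\mathscr{E}$, and the elementary fact that $0$ is the only unit of $(\omega,+)$; the only point meriting a moment's care is confirming closure of $\mathscr{E}$ under composition, and that is handed to us by $\mathfrak{e}_{i_1}\circ\mathfrak{e}_{i_2}=\mathfrak{e}_{i_1+i_2}$.
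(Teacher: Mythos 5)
Your proposal is correct and follows essentially the same route as the paper, which derives Theorem~\ref{theorem-2.8} immediately from Theorem~\ref{theorem-2.6} together with the identity $\mathfrak{e}_{i_1}\circ\mathfrak{e}_{i_2}=\mathfrak{e}_{i_1+i_2}$; you merely spell out the routine verifications (injectivity of each $\mathfrak{e}_{i_0}$, bijectivity of $i_0\mapsto\mathfrak{e}_{i_0}$, and that $0$ is the only unit of $(\omega,+)$) that the paper leaves implicit.
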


Since by Theorem~3 of \cite{Gutik-Popadiuk=2022} for any $n\in\omega$ the semigroup $\boldsymbol{B}_{\omega}^{\mathscr{F}_n}$ is isomorphic to the semigroup $\mathscr{I}_\omega^{n+1}(\overrightarrow{\mathrm{conv}})$, Corollary~\ref{corollary-2.7} and Theorem~\ref{theorem-2.8} imply the following two corollaries.

\begin{corollary}\label{corollary-2.9}
For any positive integer $n$ every automorphism of the semigroup $\boldsymbol{B}_{\omega}^{\mathscr{F}_n}$ is the identity map of $\boldsymbol{B}_{\omega}^{\mathscr{F}_n}$.
\end{corollary}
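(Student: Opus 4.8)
The plan is to transport the problem along the isomorphism $\mathfrak{I}\colon \boldsymbol{B}_{\omega}^{\mathscr{F}_n}\to \mathscr{I}_\omega^{n+1}(\overrightarrow{\operatorname{conv}})$ from \cite{Gutik-Popadiuk=2022} (the one recalled in the paragraph preceding Corollary~\ref{corollary-2.2}) and then to quote Corollary~\ref{corollary-2.7}. First I would fix a positive integer $n$ together with an arbitrary automorphism $\mathfrak{a}$ of $\boldsymbol{B}_{\omega}^{\mathscr{F}_n}$, and consider the map $\mathfrak{b}\colon \mathscr{I}_\omega^{n+1}(\overrightarrow{\operatorname{conv}})\to \mathscr{I}_\omega^{n+1}(\overrightarrow{\operatorname{conv}})$ given by $(x)\mathfrak{b}=\bigl(((x)\mathfrak{I}^{-1})\mathfrak{a}\bigr)\mathfrak{I}$. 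Being a composition of the isomorphisms $\mathfrak{I}^{-1}$, $\mathfrak{a}$ and $\mathfrak{I}$, the map $\mathfrak{b}$ is an automorphism of $\mathscr{I}_\omega^{n+1}(\overrightarrow{\operatorname{conv}})$.

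The key observation is the index shift: since $n\geqslant 1$ we have $n+1\geqslant 2$, so Corollary~\ref{corollary-2.7} applies to $\mathscr{I}_\omega^{n+1}(\overrightarrow{\operatorname{conv}})$ and gives that $\mathfrak{b}$ is the identity map of $\mathscr{I}_\omega^{n+1}(\overrightarrow{\operatorname{conv}})$. Consequently, for each $s\in \boldsymbol{B}_{\omega}^{\mathscr{F}_n}$, taking $x=(s)\mathfrak{I}$ we obtain $\bigl((s)\mathfrak{I}\bigr)\mathfrak{b}=(s)\mathfrak{I}$, that is $\bigl((s)\mathfrak{a}\bigr)\mathfrak{I}=(s)\mathfrak{I}$ after cancelling $\mathfrak{I}\mathfrak{I}^{-1}$; since $\mathfrak{I}$ is injective this yields $(s)\mathfrak{a}=s$. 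Hence $\mathfrak{a}$ is the identity map of $\boldsymbol{B}_{\omega}^{\mathscr{F}_n}$.

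I expect no real obstacle here: the statement is a purely formal consequence of Corollary~\ref{corollary-2.7} and the isomorphism $\boldsymbol{B}_{\omega}^{\mathscr{F}_n}\cong\mathscr{I}_\omega^{n+1}(\overrightarrow{\operatorname{conv}})$, the only point to watch being that a positive integer $n$ indeed produces an index $n+1\geqslant 2$, which is exactly what Corollary~\ref{corollary-2.7} requires. (The case $n=0$ is excluded on purpose, since then $\boldsymbol{B}_{\omega}^{\mathscr{F}_0}$ is isomorphic to the semigroup of $\omega{\times}\omega$-matrix units, whose automorphism group is the symmetric group on $\omega$ and is far from trivial.)
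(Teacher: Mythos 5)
Your argument is correct and is exactly the paper's own route: the paper derives Corollary~\ref{corollary-2.9} by combining the isomorphism $\boldsymbol{B}_{\omega}^{\mathscr{F}_n}\cong\mathscr{I}_\omega^{n+1}(\overrightarrow{\operatorname{conv}})$ from \cite{Gutik-Popadiuk=2022} with Corollary~\ref{corollary-2.7}, using precisely the index shift $n+1\geqslant 2$ you highlight. Your write-up merely makes the conjugation by $\mathfrak{I}$ explicit, which the paper leaves implicit.
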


\begin{corollary}\label{corollary-2.10}
For any positive integer $n$ the semigroup of injective endomorphisms of the semigroup $\boldsymbol{B}_{\omega}^{\mathscr{F}_n}$ is  isomorphic to the semigroup $(\omega,+)$. In particular the group of automorphisms of  $\boldsymbol{B}_{\omega}^{\mathscr{F}_n}$ is trivial.
\end{corollary}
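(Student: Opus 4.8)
The plan is to deduce Corollary~\ref{corollary-2.10} from Theorem~\ref{theorem-2.8} by transporting structure along the isomorphism $\mathfrak{I}\colon\boldsymbol{B}_{\omega}^{\mathscr{F}_n}\to\mathscr{I}_{\omega}^{n+1}(\overrightarrow{\operatorname{conv}})$ of \cite{Gutik-Popadiuk=2022} recalled just before Corollary~\ref{corollary-2.2}. First I would observe that for \emph{every} positive integer $n$ one has $n+1\geqslant 2$, so Theorem~\ref{theorem-2.8} applies to $\mathscr{I}_{\omega}^{n+1}(\overrightarrow{\operatorname{conv}})$ and yields that the semigroup of injective endomorphisms of $\mathscr{I}_{\omega}^{n+1}(\overrightarrow{\operatorname{conv}})$ is isomorphic to $(\omega,+)$ and that the automorphism group of $\mathscr{I}_{\omega}^{n+1}(\overrightarrow{\operatorname{conv}})$ is trivial; this is also why Corollary~\ref{corollary-2.10} can be stated for all positive integers $n$ rather than only $n\geqslant 2$.

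Next I would verify the routine fact that conjugation by $\mathfrak{I}$ identifies the endomorphism monoids: the assignment $\mathfrak{b}\mapsto\mathfrak{I}^{-1}\circ\mathfrak{b}\circ\mathfrak{I}$ sends each endomorphism of $\boldsymbol{B}_{\omega}^{\mathscr{F}_n}$ to an endomorphism of $\mathscr{I}_{\omega}^{n+1}(\overrightarrow{\operatorname{conv}})$, is a bijection with inverse $\mathfrak{c}\mapsto\mathfrak{I}\circ\mathfrak{c}\circ\mathfrak{I}^{-1}$, and is multiplicative since $\mathfrak{I}\circ\mathfrak{I}^{-1}$ is the identity map. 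Because $\mathfrak{I}$ is bijective, this conjugation carries injective endomorphisms to injective endomorphisms and automorphisms to automorphisms. Since injective endomorphisms of a semigroup form a submonoid of its full endomorphism monoid (a composition of injective maps is injective, and the identity map is injective), it follows that the semigroup of injective endomorphisms of $\boldsymbol{B}_{\omega}^{\mathscr{F}_n}$ is isomorphic to the semigroup of injective endomorphisms of $\mathscr{I}_{\omega}^{n+1}(\overrightarrow{\operatorname{conv}})$, and hence to $(\omega,+)$ by the first paragraph. The automorphism group of $\boldsymbol{B}_{\omega}^{\mathscr{F}_n}$ is then isomorphic to that of $\mathscr{I}_{\omega}^{n+1}(\overrightarrow{\operatorname{conv}})$ and so is trivial; equivalently this last assertion is Corollary~\ref{corollary-2.9}, or one may note that the automorphism group is the group of units of the monoid of injective endomorphisms, which here corresponds to the unit group $\{0\}$ of $(\omega,+)$.

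There is no real obstacle here: Corollary~\ref{corollary-2.10} is a formal transport of structure along an isomorphism. The only points that require a moment's attention are the index shift --- Theorem~\ref{theorem-2.8} is stated for $\mathscr{I}_{\omega}^{m}(\overrightarrow{\operatorname{conv}})$ with $m\geqslant 2$, applied here with $m=n+1$, together with the elementary observation $n+1\geqslant 2$ --- and the bookkeeping of composition order: with the paper's convention $(s)\mathfrak{e}$ for the action of an endomorphism, one should make sure the conjugation $\mathfrak{b}\mapsto\mathfrak{I}^{-1}\circ\mathfrak{b}\circ\mathfrak{I}$ is a homomorphism and not an anti-homomorphism of the endomorphism monoids. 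Since the monoid $(\omega,+)$ that eventually appears is commutative, even a mistake on this last point would leave the conclusion unchanged.
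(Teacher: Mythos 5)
Your proposal is correct and follows essentially the same route as the paper: the paper likewise derives Corollary~\ref{corollary-2.10} by combining the isomorphism $\boldsymbol{B}_{\omega}^{\mathscr{F}_n}\cong\mathscr{I}_{\omega}^{n+1}(\overrightarrow{\operatorname{conv}})$ from \cite{Gutik-Popadiuk=2022} with Theorem~\ref{theorem-2.8} (and Corollary~\ref{corollary-2.7}), the point being exactly the index shift $n+1\geqslant 2$ that you identify. Your explicit verification of the conjugation $\mathfrak{b}\mapsto\mathfrak{I}^{-1}\circ\mathfrak{b}\circ\mathfrak{I}$ merely spells out the transport of structure the paper leaves implicit.
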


\section{On  endomorphisms of the semigroup of $\lambda{\times}\lambda$-matrix units}

Let $\lambda$ be a non-zero cardinal and $\boldsymbol{0}\notin \lambda{\times}\lambda$. The set $\mathscr{B}_{\lambda}=\lambda{\times}\lambda\cup\{\boldsymbol{0}\}$ with the following semigroup operation
\begin{equation*}
(a,b)\cdot(c,d)=
\left\{
  \begin{array}{cl}
    (a,d),          & \hbox{if~~} b=c;\\
    \boldsymbol{0}, & \hbox{otherwise}
  \end{array}
\right.
\qquad \hbox{and} \qquad (a,b)\cdot\boldsymbol{0}=\boldsymbol{0}\cdot(a,b)=\boldsymbol{0}\cdot\boldsymbol{0}=\boldsymbol{0}, \quad \hbox{for all} \quad a,b,c,d\in\lambda,
\end{equation*}
is called the \emph{semigroup of $\lambda{\times}\lambda$-matrix units} \cite{Clifford-Preston-1961}. It is well known that $\mathscr{B}_{\lambda}$ is a combinatorial, congruence-free, primitive, completely $0$-simple inverse semigroup \cite{Lawson-1998, Petrich-1984}, and moreover $\mathscr{B}_{\lambda}$ is isomorphic to the semigroup $\mathscr{I}_\lambda^{1}$. By Proposition~4 of \cite{Gutik-Mykhalenych=2020} the semigroup $\boldsymbol{B}_{\omega}^{\mathscr{F}}$ is isomorphic to the semigroup of $\omega{\times}\omega$-matrix units $\mathscr{B}_{\omega}$ if and only if $\mathscr{F}=\{F,\varnothing\}$, where $F$ is a singleton subset of $\omega$.

For a non-zero cardinal $\lambda$ we denote by $\mathscr{S}_{\lambda}$ the group of bijective transformations of $\lambda$ and by $\mathscr{I\!\!T}\!_{\lambda}$ the semigroup of injective transformation of $\lambda$.

\begin{theorem}\label{theorem-3.1}
The semigroup $\mathfrak{End}^{\mathrm{inj}}(\mathscr{B}_{\lambda})$ of injective endomorphisms of $\mathscr{B}_{\lambda}$ is isomorphic to $\mathscr{I\!\!T}\!_{\lambda}$, and moreover the group $\mathfrak{Aut}(\mathscr{B}_{\lambda})$ of automorphisms of $\mathscr{B}_{\lambda}$ is isomorphic to $\mathscr{S}_{\lambda}$.
\end{theorem}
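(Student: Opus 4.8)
The plan is to show that an injective endomorphism of $\mathscr{B}_{\lambda}$ is completely determined by its restriction to the set of non-zero idempotents, that this restriction is encoded by an injective self-map of $\lambda$, and that conversely every injective self-map of $\lambda$ arises in this way; the resulting correspondence $f\mapsto\mathfrak{f}_{f}$ (where $\mathfrak{f}_{f}$ sends $\boldsymbol{0}$ to $\boldsymbol{0}$ and $(a,b)$ to $(f(a),f(b))$) will be the isomorphism claimed in the theorem.

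First I would check that every injective endomorphism $\mathfrak{f}$ of $\mathscr{B}_{\lambda}$ satisfies $(\boldsymbol{0})\mathfrak{f}=\boldsymbol{0}$. Since $\boldsymbol{0}$ is an idempotent and the zero of $\mathscr{B}_{\lambda}$, its image $(\boldsymbol{0})\mathfrak{f}$ is an idempotent satisfying $(x)\mathfrak{f}\cdot(\boldsymbol{0})\mathfrak{f}=(\boldsymbol{0})\mathfrak{f}$ for every $x\in\mathscr{B}_{\lambda}$. If $(\boldsymbol{0})\mathfrak{f}=(a,a)$ for some $a\in\lambda$, then a direct inspection of the multiplication in $\mathscr{B}_{\lambda}$ shows that $y\cdot(a,a)=(a,a)$ has $y=(a,a)$ as its only solution, so $(x)\mathfrak{f}=(a,a)$ for every $x$; since $|\mathscr{B}_{\lambda}|\geqslant 2$ this contradicts injectivity of $\mathfrak{f}$. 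Hence $(\boldsymbol{0})\mathfrak{f}=\boldsymbol{0}$.

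Second, for each $a\in\lambda$ the element $(a,a)$ is idempotent, so $(a,a)\mathfrak{f}$ is an idempotent, and it is not $\boldsymbol{0}$ because $(\boldsymbol{0})\mathfrak{f}=\boldsymbol{0}$ and $\mathfrak{f}$ is injective; thus $(a,a)\mathfrak{f}=(f(a),f(a))$ for a well-defined map $f\colon\lambda\to\lambda$, and $f$ is injective by injectivity of $\mathfrak{f}$, i.e. $f\in\mathscr{I\!\!T}\!_{\lambda}$. Writing $(a,b)=(a,a)\cdot(a,b)\cdot(b,b)$ and using once more that $(a,b)\mathfrak{f}\neq\boldsymbol{0}$, the multiplication of matrix units forces $(a,b)\mathfrak{f}=(f(a),f(b))$, so $\mathfrak{f}=\mathfrak{f}_{f}$. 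Conversely, given any $f\in\mathscr{I\!\!T}\!_{\lambda}$, the map $\mathfrak{f}_{f}$ is a homomorphism — the only point to verify is that $b\neq c$ implies $f(b)\neq f(c)$, so that non-composable pairs of matrix units remain non-composable — and $\mathfrak{f}_{f}$ is injective since $f$ is. Therefore $f\mapsto\mathfrak{f}_{f}$ is a bijection from $\mathscr{I\!\!T}\!_{\lambda}$ onto $\mathfrak{End}^{\mathrm{inj}}(\mathscr{B}_{\lambda})$.

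Finally I would check that this bijection respects the operations: computing $(a,b)(\mathfrak{f}_{f}\circ\mathfrak{f}_{g})$ directly, with the composition convention used in the paper, gives $(a,b)\mathfrak{f}_{fg}$, so $\mathfrak{f}_{f}\circ\mathfrak{f}_{g}=\mathfrak{f}_{fg}$ and the correspondence is a semigroup isomorphism $\mathscr{I\!\!T}\!_{\lambda}\to\mathfrak{End}^{\mathrm{inj}}(\mathscr{B}_{\lambda})$. For the second assertion, $\mathfrak{f}_{f}$ maps $\mathscr{B}_{\lambda}$ onto $\mathscr{B}_{\lambda}$ precisely when $f$ is surjective, i.e. when $f\in\mathscr{S}_{\lambda}$, and conversely every automorphism of $\mathscr{B}_{\lambda}$ is in particular an injective endomorphism; hence the isomorphism restricts to a group isomorphism $\mathscr{S}_{\lambda}\to\mathfrak{Aut}(\mathscr{B}_{\lambda})$. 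The whole argument is elementary and needs no case distinction on $\lambda$; the only place that requires a moment's care is excluding the possibility $(\boldsymbol{0})\mathfrak{f}\neq\boldsymbol{0}$ (equivalently $(a,a)\mathfrak{f}=\boldsymbol{0}$) at the very start, after which everything reduces to a direct computation in $\mathscr{B}_{\lambda}$.
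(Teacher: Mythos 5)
Your proof is correct and follows essentially the same route as the paper: encode an injective endomorphism by its action on the non-zero idempotents to get an injective self-map of $\lambda$, show the correspondence is a bijective homomorphism onto $\mathscr{I\!\!T}\!_{\lambda}$, and observe it restricts to $\mathscr{S}_{\lambda}\to\mathfrak{Aut}(\mathscr{B}_{\lambda})$. You actually spell out two steps the paper leaves implicit --- that $(\boldsymbol{0})\mathfrak{f}=\boldsymbol{0}$ and that $(a,b)\mathfrak{f}=(f(a),f(b))$ is forced via $(a,b)=(a,a)\cdot(a,b)\cdot(b,b)$ --- which is a welcome addition.
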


\begin{proof}
Let $\mathfrak{e}$  be an injective endomorphism of $\mathscr{B}_{\lambda}$. Then $(\boldsymbol{0})\mathfrak{e}=\boldsymbol{0}$ and the restriction of $\mathfrak{e}$ onto $E(\mathscr{B}_{\lambda})\setminus\{\boldsymbol{0}\}$ is an injection, i.e., there exists an injective transformation $\mathfrak{i}_{\mathfrak{e}}\colon\lambda\to\lambda$ such that $(a,a)\mathfrak{e}=((a)\mathfrak{i}_{\mathfrak{e}},(a)\mathfrak{i}_{\mathfrak{e}})$ for any $a\in\lambda$. It is obvious that $\mathfrak{i}_{\mathfrak{e}}\in \mathscr{I\!\!T}\!_{\lambda}$. Since the composition $\mathfrak{e}_1\circ\mathfrak{e}_2$ of two injective endomorphisms $\mathfrak{e}_1$ and $\mathfrak{e}_2$ of $\mathscr{B}_{\lambda}$ is an injective endomorphism,
\begin{equation*}
(a,a)(\mathfrak{e}_1\circ\mathfrak{e}_2)=((a)\mathfrak{i}_{\mathfrak{e}_1},(a)\mathfrak{i}_{\mathfrak{e_1}})\mathfrak{e}_2= (((a)\mathfrak{i}_{\mathfrak{e}_1})\mathfrak{i}_{\mathfrak{e}_2},((a)\mathfrak{i}_{\mathfrak{e_1}})\mathfrak{i}_{\mathfrak{e}_2}),
\end{equation*}
and hence $\mathfrak{i}_{\mathfrak{e}_1\circ\mathfrak{e}_2}=\mathfrak{i}_{\mathfrak{e_1}}\circ\mathfrak{i}_{\mathfrak{e}_2}$ is an injective map of $\lambda$. This implies that the such defined map $\mathfrak{J}\colon \mathfrak{End}^{\mathrm{inj}}(\mathscr{B}_{\lambda})\to \mathscr{I\!\!T}\!_{\lambda}$, $\mathfrak{e}\mapsto \mathfrak{i}_{\mathfrak{e}}$ is a homomorphism. Next we shall show that the homomorphism $\mathfrak{J}$ is surjective. Fix an arbitrary injective map $\mathfrak{i}\colon \lambda\to\lambda$. We claim that the mapping $\mathfrak{e}_{\mathfrak{i}}\colon \mathscr{B}_{\lambda}\to\mathscr{B}_{\lambda}$ by the formulae
\begin{equation*}
 (a,b)\mathfrak{e}_{\mathfrak{i}}=((a)\mathfrak{i},(b)\mathfrak{i}) \quad \hbox{for all} \quad a,b\in\lambda \qquad \hbox{and} \qquad (\boldsymbol{0})\mathfrak{e}_{\mathfrak{i}}=\boldsymbol{0},
\end{equation*}
is an injective endomorphism of the semigroup $\mathscr{B}_{\lambda}$. Indeed, since the mapping $\mathfrak{i}\colon \lambda\to\lambda$ is injective,
\begin{align*}
  (a,b)\mathfrak{e}_{\mathfrak{i}}\cdot(c,d)\mathfrak{e}_{\mathfrak{i}}&=((a)\mathfrak{i},(b)\mathfrak{i})\cdot ((c)\mathfrak{i},(d)\mathfrak{i})= \\
   & =
\left\{
  \begin{array}{cl}
    ((a)\mathfrak{i},(d)\mathfrak{i}),          & \hbox{if~~} (b)\mathfrak{i}=(c)\mathfrak{i};\\
    \boldsymbol{0}, & \hbox{otherwise}
  \end{array}
\right.=
\\
   & =
\left\{
  \begin{array}{cl}
    (a,d)\mathfrak{e}_{\mathfrak{i}},          & \hbox{if~~} b=c;\\
    \boldsymbol{0}, & \hbox{otherwise}
  \end{array}
\right.=\\
& =((a,b)\cdot(c,d))\mathfrak{e}_{\mathfrak{i}},
\end{align*}
and
\begin{align*}
  (a,b)\mathfrak{e}_{\mathfrak{i}}\cdot(\boldsymbol{0})\mathfrak{e}_{\mathfrak{i}}&= (a,b)\mathfrak{e}_{\mathfrak{i}}\cdot\boldsymbol{0} =
\boldsymbol{0} =(\boldsymbol{0})\mathfrak{e}_{\mathfrak{i}}=((a,b)\cdot\boldsymbol{0})\mathfrak{e}_{\mathfrak{i}};\\
  (\boldsymbol{0})\mathfrak{e}_{\mathfrak{i}}\cdot(a,b)\mathfrak{e}_{\mathfrak{i}}&= \boldsymbol{0}\cdot (a,b)\mathfrak{e}_{\mathfrak{i}}=
\boldsymbol{0} =(\boldsymbol{0})\mathfrak{e}_{\mathfrak{i}}=(\boldsymbol{0}\cdot(a,b))\mathfrak{e}_{\mathfrak{i}};\\
  (\boldsymbol{0})\mathfrak{e}_{\mathfrak{i}}\cdot(\boldsymbol{0})\mathfrak{e}_{\mathfrak{i}}&=\boldsymbol{0}\cdot \boldsymbol{0}= \boldsymbol{0}= (\boldsymbol{0})\mathfrak{e}_{\mathfrak{i}}=(\boldsymbol{0}\cdot \boldsymbol{0})\mathfrak{e}_{\mathfrak{i}},
\end{align*}
and hence $\mathfrak{e}_{\mathfrak{i}}$ is an  endomorphism of $\mathscr{B}_{\lambda}$. It is obvious that the injectivity of $\mathfrak{i}$ implies that the endomorphism $\mathfrak{e}_{\mathfrak{i}}$ is injective, too.

Simple verifications show that if $\mathfrak{e}$  be an automorphism of $\mathscr{B}_{\lambda}$ then the mapping $\mathfrak{i}_{\mathfrak{e}}\colon\lambda\to\lambda$ is bijective, and the bijectivity of the mapping $\mathfrak{i}\colon \lambda\to\lambda$ implies that
$\mathfrak{e}_{\mathfrak{i}}$ is an  automorphism of $\mathscr{B}_{\lambda}$. This completes the proof of the last statement.
\end{proof}

Recall \cite{Clifford-Preston-1961}, a semigroup $S$ is said to be \emph{left} (\emph{right}) \emph{cancellative} if for all $a,b,c\in S$, the equality $ab=ac$ ($ba=ca$) implies $b=c$. We remark that simple verification show that the semigroup $\mathscr{I\!\!T}\!_{\lambda}$ (and hence $\mathfrak{End}^{\mathrm{inj}}(\mathscr{B}_{\lambda})$) is left cancellative, but $\mathscr{I\!\!T}\!_{\lambda}$ is not right cancellative.

It is well known that the semigroup $\mathscr{B}_{\lambda}$ of $\lambda{\times}\lambda$-matrix units is congruence-free, i.e., $\mathscr{B}_{\lambda}$ has only two congruence: the identity and the universal congruence. This implies that every endomorphism of $\mathscr{B}_{\lambda}$ is either injective (i.e., is an isomorphism ``into'') or annihilating.

By $\mathfrak{End}^{\mathrm{ann}}(\mathscr{B}_{\lambda})$ we denote the semigroup of all annihilating endomorphisms of $\mathscr{B}_{\lambda}$.

It is obvious that for every annihilating endomorphism $\mathfrak{a}$ of $\mathscr{B}_{\lambda}$ there exits an idempotent $x\in\mathscr{B}_{\lambda}$ such that $(y)\mathfrak{a}=x$ for all $y\in\mathscr{B}_{\lambda}$, and later such endomorphism we denote by $\mathfrak{a}_{x}$. This implies that
\begin{equation*}
  \mathfrak{End}^{\mathrm{ann}}(\mathscr{B}_{\lambda})=\left\{\mathfrak{a}_{\boldsymbol{0}}\right\}\cup\left\{\mathfrak{a}_{(a,a)}\colon a\in \lambda\right\}.
\end{equation*}
It is obvious that $\mathfrak{End}^{\mathrm{ann}}(\mathscr{B}_{\lambda})$ is a right zero semigroup, $\mathfrak{End}^{\mathrm{ann}}(\mathscr{B}_{\lambda})$ is left simple and hence it is simple.

For any $\mathfrak{e}\in\mathfrak{End}^{\mathrm{inj}}(\mathscr{B}_{\lambda})$ and  $\mathfrak{a}_{x}\in\mathfrak{End}^{\mathrm{ann}}(\mathscr{B}_{\lambda})$ we have that
\begin{equation*}
  \mathfrak{e}\circ \mathfrak{a}_{x}=\mathfrak{a}_{x} \qquad \hbox{and} \qquad \mathfrak{a}_{x}\circ \mathfrak{e}=\mathfrak{a}_{(x)\mathfrak{e}}.
\end{equation*}

The above arguments we summarize in the following theorem:

\begin{theorem}\label{theorem-3.2}
The semigroup $\mathfrak{End}(\mathscr{B}_{\lambda})$ of all endomorphisms of the semigroup of $\lambda{\times}\lambda$-matrix units $\mathscr{B}_{\lambda}$ is the union of the semigroups $\mathfrak{End}^{\mathrm{inj}}(\mathscr{B}_{\lambda})$ and $\mathfrak{End}^{\mathrm{ann}}(\mathscr{B}_{\lambda})$. Moreover, $\mathfrak{End}^{\mathrm{inj}}(\mathscr{B}_{\lambda})$ a left cancellative semigroup and  $\mathfrak{End}^{\mathrm{ann}}(\mathscr{B}_{\lambda})$ is the minimal ideal of $\mathfrak{End}(\mathscr{B}_{\lambda})$ which is a right zero semigroup.
\end{theorem}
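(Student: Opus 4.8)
The plan is to assemble the observations already recorded immediately before the statement into a single argument. First I would invoke the congruence-freeness of $\mathscr{B}_{\lambda}$: for any endomorphism $\mathfrak{f}$ of $\mathscr{B}_{\lambda}$ the induced congruence $\mathfrak{C}_{\mathfrak{f}}$ is either $\Delta_{\mathscr{B}_{\lambda}}$, in which case $\mathfrak{f}$ is injective, or the universal congruence, in which case $\mathfrak{f}$ is annihilating. Hence $\mathfrak{End}(\mathscr{B}_{\lambda})=\mathfrak{End}^{\mathrm{inj}}(\mathscr{B}_{\lambda})\cup\mathfrak{End}^{\mathrm{ann}}(\mathscr{B}_{\lambda})$. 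Left cancellativity of $\mathfrak{End}^{\mathrm{inj}}(\mathscr{B}_{\lambda})$ then follows at once from Theorem~\ref{theorem-3.1}, which identifies this semigroup with $\mathscr{I\!\!T}\!_{\lambda}$, together with the observation recorded right after that theorem that $\mathscr{I\!\!T}\!_{\lambda}$ is left cancellative.

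Next I would check that $\mathfrak{End}^{\mathrm{ann}}(\mathscr{B}_{\lambda})$ is a two-sided ideal of $\mathfrak{End}(\mathscr{B}_{\lambda})$. Recall that every annihilating endomorphism has the form $\mathfrak{a}_{x}$ for a unique idempotent $x\in\mathscr{B}_{\lambda}$, and that, with the convention that $\mathfrak{f}_{1}\circ\mathfrak{f}_{2}$ means ``apply $\mathfrak{f}_{1}$, then $\mathfrak{f}_{2}$'', one has $\mathfrak{f}\circ\mathfrak{a}_{x}=\mathfrak{a}_{x}$ and $\mathfrak{a}_{x}\circ\mathfrak{f}=\mathfrak{a}_{(x)\mathfrak{f}}$ for every $\mathfrak{f}\in\mathfrak{End}(\mathscr{B}_{\lambda})$; here $(x)\mathfrak{f}$ is again an idempotent of $\mathscr{B}_{\lambda}$ since endomorphisms preserve idempotents, so $\mathfrak{a}_{(x)\mathfrak{f}}$ indeed lies in $\mathfrak{End}^{\mathrm{ann}}(\mathscr{B}_{\lambda})$. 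These are exactly the two identities displayed above, so both one-sided products with an element of $\mathfrak{End}^{\mathrm{ann}}(\mathscr{B}_{\lambda})$ stay inside $\mathfrak{End}^{\mathrm{ann}}(\mathscr{B}_{\lambda})$; in particular, taking $\mathfrak{f}=\mathfrak{a}_{y}$ gives $\mathfrak{a}_{y}\circ\mathfrak{a}_{x}=\mathfrak{a}_{x}$, so $\mathfrak{End}^{\mathrm{ann}}(\mathscr{B}_{\lambda})$ is a right zero semigroup.

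Finally I would prove minimality. Let $I$ be any ideal of $\mathfrak{End}(\mathscr{B}_{\lambda})$ and fix some $\mathfrak{f}\in I$. For each idempotent $x$ of $\mathscr{B}_{\lambda}$ the product $\mathfrak{f}\circ\mathfrak{a}_{x}$ lies in $I$ and equals $\mathfrak{a}_{x}$ by the identity above, whence $\mathfrak{a}_{x}\in I$; as the idempotents of $\mathscr{B}_{\lambda}$ are precisely $\boldsymbol{0}$ and the elements $(a,a)$, $a\in\lambda$, this gives $\mathfrak{End}^{\mathrm{ann}}(\mathscr{B}_{\lambda})\subseteq I$. Since $\mathfrak{End}^{\mathrm{ann}}(\mathscr{B}_{\lambda})$ is itself an ideal, it is the minimal ideal of $\mathfrak{End}(\mathscr{B}_{\lambda})$, and it is a right zero semigroup by the previous step. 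I do not anticipate a genuine obstacle: the whole argument is bookkeeping on top of the already-established facts, and the only points needing care are keeping the composition order straight and noting that $\mathfrak{a}_{(x)\mathfrak{f}}$ remains annihilating because $(x)\mathfrak{f}$ is again an idempotent.
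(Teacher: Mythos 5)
Your proposal is correct and follows essentially the same route as the paper, which likewise derives the decomposition from congruence-freeness, gets left cancellativity from Theorem~\ref{theorem-3.1}, and uses the identities $\mathfrak{f}\circ\mathfrak{a}_{x}=\mathfrak{a}_{x}$ and $\mathfrak{a}_{x}\circ\mathfrak{f}=\mathfrak{a}_{(x)\mathfrak{f}}$ to establish the ideal, right-zero, and minimality claims. Your explicit minimality argument (any ideal contains $\mathfrak{f}\circ\mathfrak{a}_{x}=\mathfrak{a}_{x}$) is a slightly more direct write-up of what the paper leaves to the reader, but it is the same idea.
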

%%%%%%%%%%%%%%%%%%%%%%%%%%%%%%%%%%%%%%%%%%%%%%%%%%%%%
%\section*{Acknowledgements}

%The author acknowledge Alex Ravsky and the referee for their comments and suggestions.
%%%%%%%%%%%%%%%%%%%%%%%%%%%%%%%%%%%%%%%%%%%%%%%%%%%%%%%%%%%%

\end{document}